\theoremstyle{definition}
\theoremstyle{remark}
\numberwithin{equation}{section}
\newtheorem{result}{Result}
\newcommand{\Hom}{\operatorname{Hom}}
\newtheoremstyle{fancy}{}{}{\itshape}{}{\textsc\bgroup}{.\egroup}{ }{}
\newtheoremstyle{fanci}{}{}{\rm}{}{\textsc\bgroup}{.\egroup}{ }{}
\newtheoremstyle{cobra}{}{}{\itshape}{}{\bf\bgroup}{.\egroup}{ }{}
\theoremstyle{fancy}
\newcounter{intro}
\numberwithin{equation}{section} \swapnumbers
\newtheorem{cor}[equation]{Corollary}
\newtheorem{lem}[equation]{Lemma}
\newtheorem{prop}[equation]{Proposition}
\newtheorem{thm}[equation]{Theorem}
\newtheorem{rem}[equation]{Remark}
\newtheorem{sy}[equation]{Minkowski's Theorem}
\theoremstyle{fanci}
\newtheorem{dfn}[equation]{Definition}
\newtheorem{rems}[equation]{Remarks}
\newtheorem{nota}[equation]{Notation}
\newtheorem{notarem}[equation]{Notation and Remarks}
\newtheorem{invar}[equation]{Invariants of bi-invariant metrics}
\newtheorem{rep}[equation]{Review of representation theory}
\newcommand{\mg}{\mathfrak{g}}
\newcommand{\mh}{\mathfrak{h}}
\newcommand{\mk}{\germ{K}}
\renewcommand{\mp}{\mathfrak{p}}
\renewcommand{\mh}{\mathfrak{h}}
\newcommand{\mq}{\mathfrak{q}}
\newcommand{\bs}{\backslash}
\newcommand{\Add}{\text{Ad}}
\newcommand{\Adm}{\rm{Adm}}
\newcommand{\mb}{{\mathcal{B}}}
\newcommand{\mbsemi}{{\mathcal{B}_{\rm{semi}}}}
\newcommand{\mc}{{\mathcal{M}}}
\newcommand{\Z}{\mathbb{Z}}
\newcommand{\R}{\mathbb{R}}
\newcommand{\tg}{\tilde{g}}
\newcommand{\hh}{\tilde{h}}
\newcommand{\mH}{\mathcal{H}}
\newcommand{\germ}{\mathfrak}
\newcommand{\Ad}{\text{Ad}}
\newcommand{\SU}{\operatorname{SU}}
\newcommand{\Spec}{\operatorname{Spec}}
\newcommand{\Ind}{\operatorname{Ind}}
\newcommand{\kb}{\overline{K}}
\newcommand{\gb}{\overline{g}}
\newcommand{\qb}{\overline{h}}
\newcommand{\Lc}{{\mathcal{L}}}
\def\omi{{\overline{M_i}}}
\def\omn{{\overline{M_0}}}
\def\E{{\mathcal{E}}}
\def\ogi{{\overline{g_i}}}
\def\ogn{{\overline{g_0}}}
\newcommand{\Div}{\operatorname{Div}}
\newcommand{\Grad}{\operatorname{grad}}
\newcommand{\Symm}{\operatorname{Symm}}
\newcommand{\syst}{\operatorname{syst}}
\newcommand{\vol}{\operatorname{vol}}
\def\M{{\mathcal{M}}}
\begin{document}

\newcommand{\spacing}[1]{\renewcommand{\baselinestretch}{#1}\large\normalsize}
\spacing{1.14}

\title[Spectral Isolation of Naturally Reductive Metrics]{Spectral Isolation of
Naturally Reductive Metrics on Simple Lie Groups}


\author[C.S. Gordon]{Carolyn S. Gordon $^\dagger$}
\address{Dartmouth College\\ Department of Mathematics \\ Hanover, NH 03755}
\email{carolyn.s.gordon@dartmouth.edu}
\thanks{$^\dagger$Research partially supported by National Science Foundation
grant DMS 0605247}


\author[C. J. Sutton]{Craig J. Sutton$^\sharp$}
\address{Dartmouth College\\ Department of Mathematics \\ Hanover, NH 03755}
\email{craig.j.sutton@dartmouth.edu}
\thanks{$^\sharp$ Research partially supported by an NSF Postdoctoral Fellowship
and NSF Grant DMS 0605247}

\subjclass[2000]{53C20, 58J50}
\keywords{Laplacian, eigenvalue spectrum, naturally reductive metrics, symmetric
spaces}

\begin{abstract}
We show that within the class of left-invariant naturally reductive
metrics $\mathcal{M}_{\operatorname{Nat}}(G)$ on a compact simple Lie group $G$,
every metric is spectrally isolated. 
We also observe that any collection of isospectral compact symmetric spaces is
finite; 
this follows from a somewhat stronger statement involving only a finite part of
the spectrum.
\end{abstract}

\maketitle

\section{Introduction}

Given a connected closed Riemannian manifold $(M,g)$ its \emph{spectrum},
denoted $\Spec(M,g)$, is defined to be the sequence of eigenvalues $0 =
\lambda_{0} <
\lambda_{1} \leq \lambda_{2} \leq \cdots \nearrow + \infty$ of the associated
Laplacian $\Delta$ acting on smooth functions. Two Riemannian manifolds $(M_{1},
g_{1})$
and $(M_{2}, g_{2})$ are said to be \emph{isospectral} if their spectra
(counting multiplicities) agree.   Inverse spectral geometry is the
study of the extent to which geometric properties of a Riemannian manifold
$(M,g)$ are encoded in its
spectrum.

One typically expects that distinguished 
metrics, e.g., metrics of constant curvature or more generally symmetric spaces, 
should be spectrally distinguishable from other metrics.  However, only a very
few metrics, such as flat metrics on two-dimensional tori and Klein bottles as well as 
the round metric on the two-sphere, are 
actually known to be spectrally unique.
In fact, except among \emph{orientable} manifolds in low dimensions \cite{Tanno}, it is not known whether one can
tell from the spectrum whether a metric has constant sectional curvature or even
whether it is flat. Moreover, examples \cite{GSz} show that the spectrum does
not determine 
whether a closed manifold has constant scalar curvature or whether a manifold 
with boundary has constant Ricci curvature.

For metrics $g$ that are not spectrally determined or not known to be so, one
may ask about the size or structure of the isospectral set of $g$, i.e., of the
set of isometry classes of metrics that are isospectral to $g$. Types of results
include:
\begin{itemize}
\item {\bf Spectral finiteness}:  showing that a metric is determined up to
finitely
many possibilities, at least within some class of metrics.  For example,
isospectral
sets of Riemann surfaces \cite{McKean} and isospectral sets of flat tori
\cite{Wolpert, Pesce} are always finite.
\item {\bf Spectral isolation}:  showing that within the space of all metrics on
a
given manifold $M$, a punctured neighborhood of a metric $g$ (in a suitable
topology on the space of metrics) contains no metrics isospectral to $g$.  For
instance,
in all dimensions, the round metric on a sphere is spectrally isolated
\cite{Tanno2}, and any flat metric \cite{Ku} or any metric of constant negative
curvature \cite{Sh} on a manifold is spectrally isolated.
\item {\bf Spectral rigidity}: showing that a metric cannot be isospectrally
deformed. 
For example, negatively curved metrics \cite{GK, CS} 
and metrics of constant positive curvature \cite{Tanno2} are spectrally rigid.
\end{itemize}

For arbitrary metrics, a major open question is whether isospectral sets are
always compact.  More precisely, is it the case that every isospectral set of
closed Riemannian manifolds contains only finitely many diffeomorphism types
$M_1,\dots, M_k$ with corresponding sets of metrics
that are compact in the
$C^{\infty}$-topology?  Osgood, Phillips and Sarnak \cite{OPS} gave a positive
answer in the case of surfaces.  Under various restrictions, some progress has
been made in higher dimensions, but the general question remains elusive.  
Spectral isolation or rigidity results are not sufficient for compactness,
although they lend support to the conjecture.

Other than metrics of constant curvature, we are not aware of any special
metrics that are known to be spectrally isolated.   This article was initially
motivated by the difficult question of whether the spectrum determines whether a
compact Riemannian manifold  is symmetric.   As a first step, we consider the
following questions:\begin{enumerate} 
\item Is a symmetric Riemannian metric on a compact manifold spectrally isolated
within some larger natural class of metrics?   
\item Is every collection of mutually isospectral compact symmetric spaces
finite? 
\end{enumerate} 

We answer the second question affirmatively  in Corollary~\ref{cor1}.   In fact,
we
prove the following stronger statement:
\begin{result}
\emph{Within the class of compact symmetric spaces of a given dimension, each
compact symmetric space is finitely determined by a lower volume bound and a
finite part of its spectrum.}
\end{result}
\noindent
See Corollary~\ref{cor1} for a more precise statement.   Note that the various
known examples of isospectral flat tori show that the result cannot be
strengthened to spectral uniqueness in general.

For the first question, we consider the class of naturally reductive
left-invariant metrics on compact simple Lie groups.   The symmetric metrics
within this class are the bi-invariant metrics.   The naturally reductive
metrics are the
left-invariant metrics that in many ways most closely resemble the symmetric
ones.   For example, the geodesic symmetries are volume preserving, and every
geodesic is an orbit of a one-parameter group of isometries.   Moreover, while
the condition of natural reductivity does not imply the Einstein condition,
the class of naturally reductive metrics do provide a rich source of Einstein
metrics (see,
for example, \cite{DZ} or \cite{WangZiller}).  Thus one might expect that if the
bi-invariant metric is not uniquely determined by its spectrum, the naturally
reductive metrics
would be the most likely source of counterexamples.  We in fact find that not
only is the bi-invariant metric spectrally isolated
among the left-invariant naturally reductive metrics, but that \emph{every}
metric in this class is spectrally isolated within the class.

The class of left-invariant metrics on a compact Lie group $G$ of dimension $n$
may be identified with $\Symm_{+}(n, \R) \subset GL(n, \R)$, the space of
$n\times n$ positive definite symmetric matrices.   We can give the space
$\mathcal{M}_{\operatorname{Nat}}(G)$ of naturally reductive left-invariant
metrics the subspace topology from $\Symm_{+}(n, \R)$.

\begin{result}[Main Result, Theorem~\ref{Thm:LocRigid}]
\emph{Let $G$ be a compact simple Lie group.  Then
every metric $g$ in the space $ \mathcal{M}_{\operatorname{Nat}}(G)$ of
naturally reductive left-invariant metrics on $G$ is spectrally isolated
within $ \mathcal{M}_{\operatorname{Nat}}(G)$. That is, there exists an open
neighborhood $U$ of $g$ in
$\mathcal{M}_{\operatorname{Nat}}(G)$ such that if $g' \in U$ and $\Spec(G,g') =
\Spec(G,g)$, then $(G,g')$ and $(G,g)$ are isometric. }
\end{result}

\noindent
We do not know whether the spectral isolation statement of
Theorem~\ref{Thm:LocRigid} 
can be strengthened to uniqueness or finiteness. 
Currently, there are no known examples of pairs of isospectral left-invariant
naturally 
reductive metrics on Lie groups, but the second author has constructed examples
of isospectral naturally reductive
(in fact, normal homogeneous) metrics on simply-connected \emph{quotients} of
$\SU(n)$, where $n$ is extremely large \cite{Sut}.

Theorem \ref{Thm:LocRigid}
cannot be generalized to arbitrary left-invariant metrics.  Indeed, D. Schueth
\cite{Schueth}  and E. Proctor \cite{Proctor} have shown the existence of
non-trivial isospectral deformations
of left-invariant metrics on every classical simple compact Lie group except for
a few low-dimensional ones.  
On the other hand, the authors, along with D. Schueth, demonstrate in a
forthcoming article \cite{GSS}, that each \emph{bi-invariant} metric on a
connected compact Lie group is spectrally isolated within the class
of all left-invariant metrics.  

Presently, establishing the conjecture that the bi-invariant metric is
spectrally isolated
among \emph{all} metrics on a compact Lie group appears to be a rather
formidable task.
However, in light of the fact that most counterexamples in spectral geometry 
exploit metrics with ``large'' symmetry groups,  Theorem~\ref{Thm:LocRigid} and 
the results of \cite{GSS} lend strong support to this conjecture.  

The outline of this paper is as follows. In Section~\ref{Sec:BiInvariant} we
address the finiteness of isospectral sets of symmetric spaces. In
Section~\ref{Sec:NatRed} we review
the definition of naturally reductive metrics and the classification of
left-invariant naturally reductive metrics on compact simple Lie groups due to
D'Atri and Ziller \cite{DZ}, and we establish a structural result. 
Section~\ref{Sec:MainTheorem} contains the proof of our main result. 
We conclude our paper in Section~\ref{Sec:SemiNatRed}
by describing how to compute explicitly the spectra of certain left-invariant
naturally reductive metrics on Lie groups.   

\medskip
\noindent
{\bf Acknowledgments.} We consulted several colleagues concerning
Proposition~\ref{prop.conj} (1) and would like to thank G. Prasad for making us
aware of R.W. Richardson's paper ``A rigidity theorem for subalgebras of Lie and
associative algebras'' \cite{Richardson}, which provides an elegant proof.


\section{Isospectral sets of symmetric spaces}\label{Sec:BiInvariant}

\begin{dfn}\label{Def:ESet} Let $(M,g)$ be a closed Riemannian manifold.
\begin{enumerate}
\item[(i)] The \emph{eigenvalue set}, $\E(M,g)$, of $(M,g)$ is
the collection of eigenvalues of the Laplace-Beltrami operator $\Delta$ of
$(M,g)$
ignoring multiplicities.
\item[(ii)] The \emph{fundamental tone} of $(M,g)$, denoted by $\lambda_1(M,g)$,
is the lowest non-zero eigenvalue of the Laplace-Beltrami operator $\Delta$ of
$(M,g)$.
\end{enumerate}
\end{dfn}
 
 \begin{rem}\label{Rem:InvHomothety}
 We note that for any Riemannian manifold $(M,g)$ of dimension $n$ the quantity 
 $\lambda_1(M,g)^{\frac{n}{2}}\vol(M,g)$ is invariant under homotheties of $g$.
 \end{rem}
 
In this section, we focus on 
the class of all compact symmetric spaces, including not only symmetric
spaces of the so-called compact type but also those with Euclidean (toral)
factors, and we prove the following rigidity result:

 \begin{thm}\label{thm.sym2}  Given a positive integer $n$ and positive real
numbers $\lambda$ and $v$, there exists a constant $A>1$ depending only on $n$
and
on $\lambda^{\frac{n}{2}}v$ such that for any finite subset $E$  of the interval
$[\lambda,A\lambda]$, there exist up to isometry at
most finitely many $n$-dimensional compact symmetric spaces $(M,g)$ satisfying
the following
conditions: 
\begin{enumerate}
\item $\lambda_1(M,g)\geq \lambda$
\item $\vol(M,g)\geq v$
\item $\E(M,g)\cap [\lambda,A\lambda]\subset E$.
\end{enumerate}
\end{thm}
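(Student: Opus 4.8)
\emph{Setup and Step 1 (reduction to a compact family).} By rescaling the metric and invoking Remark~\ref{Rem:InvHomothety} (which makes $\lambda^{n/2}v$ a homothety invariant), it suffices to treat the case $\lambda=1$ and to produce $A$ depending only on $n$ and $v$. Up to isometry an $n$-dimensional compact symmetric space is a Riemannian product $(\R^{n_0}/L)\times(M_1,c_1g_1)\times\cdots\times(M_r,c_rg_r)$ of a flat torus and irreducible symmetric spaces $(M_i,g_i)$ of compact type scaled by constants $c_i>0$, and for fixed $n$ the \emph{discrete data} (the dimensions $n_i$, the local types and global forms of the $M_i$, and any finite twist in the product) take only finitely many values. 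Since $\lambda_1$ of a product is the minimum of the $\lambda_1$'s of its factors, condition (1) gives $\lambda_1(M_i,c_ig_i)\geq 1$ and $\lambda_1(\R^{n_0}/L)\geq 1$. The former bounds each $c_i$ from above by a constant depending only on the type of $M_i$; the latter says $L^{*}$ has no nonzero vector shorter than a fixed positive length, so $\operatorname{covol}(L^{*})$, hence $1/\operatorname{covol}(L)$, is bounded below (Minkowski), i.e.\ $\operatorname{covol}(L)$ is bounded above. Feeding these bounds into $\vol(M,g)=\operatorname{covol}(L)\prod_i c_i^{n_i}\vol(M_i,g_i)\geq v$ from condition (2) forces each $c_i$ and $\operatorname{covol}(L)$ to be bounded \emph{below} as well, so by Mahler's compactness criterion the admissible lattices $L$ form a compact set of isometry classes. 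Thus every symmetric space satisfying (1) and (2) lies in a family $\mathcal{F}=\mathcal{F}(n,v)$ that is a finite union of pieces, each parametrized by a compact space (finitely many discrete choices, the $c_i$ in a compact box, $L$ in a compact moduli space), and on each piece every Laplace eigenvalue $\lambda_k(\cdot)$ depends continuously on the parameters.

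\emph{Step 2 (rigidity from a finite part of the spectrum, counted with multiplicity).} The key claim is that there is an integer $K_0=K_0(\mathcal{F})$ such that any two members of $\mathcal{F}$ having the same first $K_0$ Laplace eigenvalues \emph{counted with multiplicity} lie in one of only finitely many isometry classes. Here one uses that the heat trace of a product factors: from a sufficiently long initial segment of the spectrum one recovers, via the small-time and large-time asymptotics of the factor heat traces, the dimensions and volumes of all factors and the bottom eigenvalue of each irreducible factor — which pins down each $c_i$ — together with a long initial segment of the spectrum of the flat factor $\R^{n_0}/L$, whence $L$ up to finitely many possibilities because isospectral flat tori are finite in number \cite{Wolpert,Pesce}. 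A compactness argument inside the compact family $\mathcal{F}$ then lets one replace ``equal full spectra'' by ``equal first $K_0$ eigenvalues'' for a single uniform $K_0$.

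\emph{Step 3 (choice of $A$ and conclusion).} Since $\mathcal{F}$ is compact and $\lambda_{K_0}(\cdot)$ is continuous, $\Lambda_{K_0}:=\sup_{\mathcal{F}}\lambda_{K_0}(\cdot)$ is finite and depends only on $n$ and $v$; set $A:=\max\{2,\Lambda_{K_0}\}$. Suppose, toward a contradiction, that for some finite $E\subset[1,A]$ there are infinitely many pairwise non-isometric $n$-dimensional compact symmetric spaces $(M_j,g_j)$ satisfying (1)--(3). By Step~1 they all lie in $\mathcal{F}$, so after passing to a subsequence $(M_j,g_j)\to(M_\infty,g_\infty)$ in $\mathcal{F}$. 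For every $j$ and every $1\leq k\leq K_0$ we have $1\leq\lambda_1(M_j,g_j)\leq\lambda_k(M_j,g_j)\leq\Lambda_{K_0}\leq A$ by (1) and the choice of $A$, hence $\lambda_k(M_j,g_j)\in E$ by (3); so the first $K_0+1$ eigenvalues of each $M_j$ lie in the finite set $E\cup\{0\}$. Since $\lambda_k(M_j,g_j)\to\lambda_k(M_\infty,g_\infty)$ for each $k$, each sequence $(\lambda_k(M_j,g_j))_j$ with $k\leq K_0$ takes finitely many values and converges, hence is eventually constant; choosing a common index beyond which this holds for all of the finitely many $k\leq K_0$, the spaces $M_j$ with $j$ large share their first $K_0$ eigenvalues (those of $M_\infty$). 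By Step~2 these $M_j$ fall into only finitely many isometry classes, contradicting pairwise non-isometry. Hence, for this $A$, every finite $E$ admits only finitely many symmetric spaces satisfying (1)--(3), which is the assertion. The main obstacle is Step~2 — extracting the product structure from a finite portion of the spectrum and controlling the flat factor — and the finiteness of isospectral flat tori is indispensable there.
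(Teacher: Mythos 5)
Your Step~1 is essentially sound and matches the paper's reduction: finitely many homogeneity types in each dimension (Lemma~\ref{lem.finhom}), scale factors $c_i$ bounded above by condition~(1) via the homothety invariant and bounded below by condition~(2), and the torus moduli confined to a compact set via Minkowski/Mahler. Your Step~3, as a compactness-and-contradiction argument, is formally fine and would close the proof \emph{if} Step~2 were true. The genuine gap is in Step~2.

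The heat-trace argument in Step~2 does not work. The small-time asymptotics of $\sum_k e^{-\lambda_k t}$, which is what produces the dimension and volume of $M$ (and of any would-be factor), requires the \emph{entire} spectrum; it cannot be extracted from a finite initial segment $\lambda_0,\dots,\lambda_{K_0}$. Likewise, ``recovering the bottom eigenvalue of each irreducible factor'' from finitely many eigenvalues of the product is an unaddressed deconvolution problem: the eigenvalues of $M$ are of the form $\mu_0+\mu_1+\cdots+\mu_k$ with $\mu_i\in\Spec$ of the $i$-th local factor (restricted by the twist $\Gamma$), and there is no reason a single eigenvalue of the product identifies which $\mu_i$ is $\lambda_1$ of factor $i$. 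Finally, the ``compactness argument'' by which you propose to upgrade ``equal full spectra~$\Rightarrow$~finitely many isometry classes'' to ``equal first $K_0$ eigenvalues~$\Rightarrow$~finitely many isometry classes'' for a \emph{single uniform} $K_0$ is not carried out, and I do not see how it follows: continuity of $\lambda_k$ on a compact parameter space does not by itself yield a uniform finite cutoff, and in any case the full-spectrum statement you are bootstrapping from (isospectral finiteness of compact symmetric spaces) is what the paper deduces \emph{from} Theorem~\ref{thm.sym2}, not an available ingredient.

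What you are missing is the paper's key structural observation: for each $i$, the projection $(M,g)\to(\omi,\ogi)$ (where $\omi=\Gamma_i\backslash M_i$) is a Riemannian submersion with totally geodesic fibers, so $\E(\omi,\ogi)\subset\E(M,g)$. In particular $\lambda_1(\omi,\ogi)$ \emph{is itself an element} of $\E(M,g)$ — no deconvolution is required. Combining the homothety identity $\lambda_1(\omi,\ogi)^{m_i/2}V_i=c_i$ with the upper bound on $V_i$ from condition~(1) and the lower bound on $V_i$ coming from condition~(2) (via $\vol(M,g)=aV_0V_1\cdots V_k$) pins $\lambda_1(\omi,\ogi)$ into $[\lambda,A_i\lambda]$, so condition~(3) forces it into the finite set $E$ and hence determines $a_i$ up to finitely many values. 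For the flat factor one needs more than $\lambda_1$; the paper runs Pesce's argument on $\omn$: a Minkowski-reduced basis $\{\delta_i\}$ of the dual lattice produces the finitely many invariants $4\pi^2\|\delta_i\|^2$ and $4\pi^2\|\delta_i+\delta_j\|^2$ that determine the torus up to isometry, and all of these lie in $[\lambda,A_0\lambda]\cap\E(\omn)\subset\E(M,g)$, hence in $E$. You cite Wolpert/Pesce and use Minkowski, so you have the right pieces for the torus part, but without the submersion observation you cannot locate these invariants inside $\E(M,g)$ and confine them to $[\lambda,A\lambda]$ — which is exactly what makes the ``finite part of the spectrum'' hypothesis usable.
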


\begin{rem}
The choice of $A$ in the proof of the theorem will guarantee that $\E(M,g) \cap
[\lambda , A\lambda] \neq \emptyset$ 
for any symmetric space $(M,g)$ satisfying conditions (1) and (2) of the
theorem.
\end{rem}

\begin{cor}\label{cor1} 
In any given dimension, each compact symmetric space is finitely determined by a
lower volume bound and a finite part of its spectrum.
More specifically, let $M$ be a compact symmetric space of dimension $n$. 
Then, up to isometry, there exist only finitely many $n$-dimensional compact
symmetric spaces $M'$ such that 
\begin{enumerate}
\item $\vol(M')\geq \vol(M)$
\item $\E(M')\cap [0,A\lambda_1(M)] = \E(M)\cap [0,A\lambda_1(M)]$
\end{enumerate}
where $A$ is the constant given in Theorem~\ref{thm.sym2}.
\end{cor}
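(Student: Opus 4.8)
The plan is to derive Corollary~\ref{cor1} from Theorem~\ref{thm.sym2} essentially by inspection, with the only subtlety being a reduction to the case where the fundamental tone is uniformly bounded below. First I would fix the compact symmetric space $M$ of dimension $n$, set $\lambda = \lambda_1(M)$ and $v = \vol(M)$, and let $A>1$ be the constant produced by Theorem~\ref{thm.sym2} applied to the triple $(n,\lambda,v)$; note that $A$ depends only on $n$ and on $\lambda^{n/2}v$, and by Remark~\ref{Rem:InvHomothety} the latter quantity is a homothety invariant of $M$, so $A$ is genuinely an invariant of $M$ as claimed in the statement. Now suppose $M'$ is an $n$-dimensional compact symmetric space satisfying conditions (1) and (2) of the corollary, i.e. $\vol(M')\geq \vol(M) = v$ and $\E(M')\cap[0,A\lambda] = \E(M)\cap[0,A\lambda]$.

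The key observation is that the second condition forces $\lambda_1(M')\geq \lambda$: indeed, $\lambda_1(M)=\lambda$ lies in $\E(M)\cap[0,A\lambda]$ (since $A>1$), hence in $\E(M')$, and there can be no eigenvalue of $M'$ strictly between $0$ and $\lambda$, because such an eigenvalue would lie in $\E(M')\cap[0,A\lambda]$ but not in $\E(M)\cap[0,A\lambda]$, contradicting the equality of these sets. Therefore $M'$ satisfies hypothesis (1) of Theorem~\ref{thm.sym2} with this $\lambda$, hypothesis (2) with this $v$, and hypothesis (3) with $E := \E(M)\cap[\lambda,A\lambda]$, which is a finite subset of $[\lambda,A\lambda]$ (finiteness of $E$ holds because the interval is bounded and the spectrum is discrete). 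Theorem~\ref{thm.sym2} then yields that, up to isometry, only finitely many such $M'$ exist.

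For condition (3) of the theorem I should check the set-theoretic inclusion precisely: $\E(M')\cap[\lambda,A\lambda] = \bigl(\E(M')\cap[0,A\lambda]\bigr)\cap[\lambda,\infty) = \bigl(\E(M)\cap[0,A\lambda]\bigr)\cap[\lambda,\infty) = \E(M)\cap[\lambda,A\lambda] = E$, so in fact equality holds, which is stronger than the required containment $\subset E$. This completes the deduction, and the only place where any real content is invoked is Theorem~\ref{thm.sym2} itself; the corollary is purely a matter of unwinding the hypotheses and checking that the fundamental tone and eigenvalue-set hypotheses of the theorem are implied by the slightly different (and more natural) hypotheses stated in the corollary. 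I do not anticipate any genuine obstacle here — the main thing to be careful about is the logical direction of the fundamental-tone inequality and the verification that $A$ depends only on data intrinsic to $M$, both of which are immediate once one recalls $A>1$ and Remark~\ref{Rem:InvHomothety}.
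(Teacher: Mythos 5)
Your derivation is correct and is exactly the intended reduction to Theorem~\ref{thm.sym2} (the paper states the corollary without writing out this routine verification). The only point that required care — that the equality of eigenvalue sets in $[0,A\lambda_1(M)]$ forces $\lambda_1(M')\geq\lambda_1(M)$, so hypothesis (1) of the theorem is satisfied — is handled correctly, and the rest is bookkeeping; the aside about $A$ being a homothety invariant is accurate but not needed for the deduction.
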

 
Since volume and dimension are spectral invariants, Corollary~\ref{cor1}
gives us the following result.
 
\begin{cor}\label{cor.sym} A compact symmetric space $(M,g)$ is finitely
determined by its spectrum within the class of compact symmetric spaces.
\end{cor}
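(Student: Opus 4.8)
The plan is to deduce the statement directly from Corollary~\ref{cor1}, using only the classical fact that dimension and volume are spectral invariants. Indeed, by the Minakshisundaram--Pleijel heat trace expansion $\sum_{i\geq 0}e^{-\lambda_i t}\sim(4\pi t)^{-n/2}(\vol(M,g)+O(t))$ as $t\to 0^+$ (equivalently, by Weyl's asymptotic law), the spectrum of a closed Riemannian manifold determines both its dimension $n$ and its volume $\vol(M,g)$. Furthermore, the eigenvalue set $\E(M,g)$, and in particular the fundamental tone $\lambda_1(M,g)$, are manifestly determined by $\Spec(M,g)$.

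So first I would fix a compact symmetric space $(M,g)$ of dimension $n$, put $\lambda=\lambda_1(M,g)$ and $v=\vol(M,g)$, and let $A=A(n,\lambda^{n/2}v)>1$ be the constant supplied by Theorem~\ref{thm.sym2}, which is precisely the constant named in Corollary~\ref{cor1}. Next, let $(M',g')$ be any compact symmetric space isospectral to $(M,g)$. By the invariants recalled above, $M'$ is $n$-dimensional, $\vol(M',g')=v$, and $\E(M',g')\cap[0,A\lambda]=\E(M,g)\cap[0,A\lambda]$. In particular $\vol(M')\geq\vol(M)$ and condition (2) of Corollary~\ref{cor1} holds, so that corollary yields only finitely many isometry classes of such $M'$. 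Hence the set of isometry classes of compact symmetric spaces isospectral to $(M,g)$ is finite, which is the assertion.

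At this level there is essentially no obstacle: all the substance is contained in Theorem~\ref{thm.sym2}, which we take as given, and the remaining input is the textbook spectral determination of $n$ and $\vol$. The only point worth a word of care is that Corollary~\ref{cor1} is phrased with the inequality $\vol(M')\geq\vol(M)$ rather than an equality; this causes no trouble here, since isospectrality forces equality of volumes and hence the required inequality holds automatically.
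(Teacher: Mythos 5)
Your proof is correct and is essentially identical to the paper's: the authors dispose of this corollary with the one-line observation that dimension and volume are spectral invariants, and then invoke Corollary~\ref{cor1} exactly as you do. Your extra care about the volume inequality versus equality is a fine (if unneeded) clarification of the same argument.
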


We now turn to the proof of Theorem~\ref{thm.sym2}. 

\begin{notarem}\label{nota.sym}  \text{}
\begin{enumerate}  
\item[(i)] Every irreducible simply-connected
symmetric space $M$ of compact type is either a simple compact Lie group $G$
with a bi-invariant metric or it is of the form $G/K$, where $G$ is a simple
compact Lie group and $K$ is one of only finitely many connected Lie subgroups
of $G$ for which $G/K$ admits a symmetric metric.  We will consider both cases
together by taking $K=\{e\}$ in the first case.   The symmetric metric on $G/K$
is unique up to scaling.
 
 \item[(ii)] Every compact symmetric space is of the form 
 $$M=\Gamma\bs(M_0\times M_1\times\dots\times M_k)$$ where $M_0$ is a torus
(viewed as an abelian Lie group), $M_i$ is an irreducible symmetric space of
compact type, say $M_i=G_i/K_i$ as above, and $\Gamma$ is a
finite subgroup of the center of $M_0\times G_1\times\dots\times G_k$.  We may
assume that $\Gamma\cap M_0$ is trivial; otherwise, replace $M_0$ by the torus
$M_0/ (\Gamma\cap M_0)$.   Ignoring the metric,
we will refer to the structure $\Gamma\bs(M_0\times G_1/K_1\times\dots\times
G_k/K_k)$ as the \emph{homogeneity type} of $M$.  
Thus the homogeneity type of $M$ is its structure as a quotient of a compact Lie
group.

\item[(iii)]  Given $M$ with homogeneity type as above, let $\M$ be the
collection of symmetric metrics on $M$.  Each $g\in\M$ lifts to a symmetric
metric on $M_0\times M_1\times\dots\times M_k$ of the form $g_0\times
g_1\times\dots\times g_k$ where $g_i$ is a symmetric metric on $M_i$.  In
particular, $g_0$ is a flat metric on the torus $M_0$ and $g_i$ is given by a
multiple $-a_iB_{\mg_i}$ of the Killing form $B_{\mg_i}$ of the Lie algebra
$\mg_i$ of $G_i$, where $a_i > 0$.  Thus, the data defining the metric $g$
consists of the
positive scalars $a_1,\dots,a_k$ and the flat metric $g_0$.  Letting $\Gamma_0$
denote the image of $\Gamma$ under the homomorphic projection $M_0\times
G_1\times\dots\times G_k\to M_0$, then two metrics $g, g'\in\M$ are isometric if
the associated scalars satisfy $a_i=a_i'$, $1\leq i\leq k$ and if there exists a
$\Gamma_0$-equivariant isometry $(M_0,g_0)\to (M_0,g_0')$.
\end{enumerate}

 \end{notarem} 

\begin{lem}\label{lem.finhom}  In any given dimension $n$, there are only
finitely many homogeneity types of compact symmetric spaces.
\end{lem}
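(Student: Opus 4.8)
The plan is to read finiteness straight off the structure theory recorded in Notation and Remarks~\ref{nota.sym}. By part~(ii), every compact symmetric space of dimension $n$ has a homogeneity type of the form $\Gamma\bs(M_0\times G_1/K_1\times\dots\times G_k/K_k)$, in which $M_0$ is a torus, each $G_i/K_i$ is an irreducible symmetric space of compact type with $G_i$ simple (taking $K_i=\{e\}$ in the group case, per part~(i)), and $\Gamma$ is a finite central subgroup of $\hat G:=M_0\times G_1\times\dots\times G_k$ with $\Gamma\cap M_0=\{e\}$. So I would show that, in terms of $n$ alone, there are finitely many choices for (a) the torus $M_0$, (b) the list of pairs $(G_i,K_i)$, and (c) the subgroup $\Gamma$; the lemma then follows.

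For (a) and (b), the starting point is that $\Gamma$ being finite gives $\dim M_0+\sum_{i=1}^k\dim(G_i/K_i)=n$, and every irreducible compact-type symmetric space has dimension at least $2$, so $k\le n/2$ and $\dim M_0\le n$; since a torus is determined up to isomorphism of Lie groups by its dimension, this leaves finitely many $M_0$. The one step that is not pure bookkeeping is to convert the bound $m_i:=\dim(G_i/K_i)\le n$ into a bound on $\dim G_i$. Here I would use that the isotropy representation of an irreducible compact-type symmetric space with $G_i$ simple is infinitesimally faithful: its kernel is an ideal of $\mathfrak{k}_i$, hence (using $[\mathfrak{p}_i,\mathfrak{p}_i]\subset\mathfrak{k}_i$ and $[\mathfrak{k}_i,\mathfrak{p}_i]\subset\mathfrak{p}_i$) an ideal of the simple algebra $\mathfrak{g}_i$ contained in the proper subalgebra $\mathfrak{k}_i$, hence zero. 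Thus $\mathfrak{k}_i$ embeds in $\mathfrak{so}(m_i)$ and $\dim\mathfrak{g}_i\le\binom{m_i}{2}+m_i\le\binom{n}{2}+n$, so $\mathfrak{g}_i$ belongs to the finite list of compact simple Lie algebras of dimension at most $\binom{n}{2}+n$ given by Cartan's classification. Each such algebra is the Lie algebra of only finitely many connected compact Lie groups $G_i$ (the simply-connected form modulo subgroups of its finite center), and by part~(i) each $G_i$ admits only finitely many subgroups $K_i$ realizing a symmetric pair; together with $k\le n/2$ this leaves finitely many lists $(G_i,K_i)_{i=1}^k$.

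For (c) I would fix $M_0$ and $(G_i,K_i)_{i=1}^k$ and write $Z(\hat G)=M_0\times F$ with $F:=Z(G_1)\times\dots\times Z(G_k)$, a finite group whose order is bounded in terms of $n$ since each $Z(G_i)$ is finite. As $\Gamma\cap M_0=\{e\}$, the projection $\pi\colon M_0\times F\to F$ is injective on $\Gamma$, so $\Gamma$ is the graph of a homomorphism $\phi\colon\bar\Gamma\to M_0$ for some subgroup $\bar\Gamma:=\pi(\Gamma)\le F$. There are finitely many such $\bar\Gamma$, and for each one, writing $e$ for its exponent, any homomorphism $\bar\Gamma\to M_0$ lands in the $e$-torsion subgroup of $M_0$, which is finite; hence $\Hom(\bar\Gamma,M_0)$ is finite and there are finitely many $\Gamma$ (one need not even pass to $\hat G$-conjugacy classes). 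Assembling (a), (b), and (c) yields finitely many homogeneity types. I expect the only genuine obstacle to be the dimension estimate $\dim\mathfrak{g}_i\le\binom{n}{2}+n$ — that is, ruling out the a priori possibility of an enormous $G_i$ with $G_i/K_i$ still small — with everything else being routine counting.
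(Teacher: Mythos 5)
Your proof is correct and follows essentially the same outline as the paper's: finitely many choices for the torus factor (determined up to isomorphism by its dimension), finitely many possible irreducible compact-type factors in each bounded dimension, and finitely many finite central subgroups $\Gamma$ with $\Gamma\cap M_0$ trivial, counted via the graph-of-a-homomorphism observation, which is exactly the paper's argument for $\Gamma$. Where you add genuine content is in the first step: the paper simply asserts that there are only finitely many irreducible simply-connected compact symmetric spaces of dimension at most $n$ (a consequence of the Cartan classification), whereas you derive the needed bound $\dim\mathfrak{g}_i\le\binom{m_i}{2}+m_i$ from faithfulness of the isotropy representation --- its kernel is an ideal of $\mathfrak{k}_i$, hence of the simple $\mathfrak{g}_i$, hence zero --- so that $\mathfrak{k}_i$ embeds in $\mathfrak{so}(m_i)$. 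This cleanly rules out the \emph{a priori} possibility of an enormous $G_i$ with $G_i/K_i$ of small dimension, a point the paper leaves implicit. The remaining bookkeeping ($k\le n/2$; $\Gamma$ determined by a subgroup of the finite center $Z(G_1)\times\cdots\times Z(G_k)$ and a homomorphism into the finite $e$-torsion subgroup of $M_0$) matches the paper's reasoning, and as you note, one does not even need to pass to conjugacy classes of $\Gamma$ to conclude finiteness.
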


\begin{proof}  There are only finitely many homogeneity types of irreducible
simply-connected compact symmetric spaces of dimension at most $n$. The torus
$M_0$ (viewed as a Lie group) depends up to isomorphism only on its dimension.
Moreover for a given choice of $M_0\times M_1\times\dots\times M_k$ of dimension
$n$, there are only finitely many choices of $\Gamma$ as in \ref{nota.sym}. 
(Recall our standing assumption that $\Gamma\cap M_0=\{0\}$.)  
Indeed, the group $G:=G_1\times \dots\times G_k$ has finite center $Z(G)$ and
there are only finitely many homomorphisms from subgroups of $Z(G)$ into $M_0$. 
The lemma follows.
\end{proof}

 \begin{notarem}\label{nota.invar} \text{}
 Up to isometry, a flat torus may be expressed as $T=\Lc\bs\R^m$ with metric
given by the standard Euclidean inner product, where $\Lc$ is a lattice of full
rank in $\R^m$.  Two tori $\Lc\bs\R^m$ and $T=\Lc^{'} \bs\R^m$ 
are isometric if and only if $\Lc$ and $ \Lc'$ are congruent.
  Given $T=\Lc\bs\R^m$, let $\Lc^*$ be the dual lattice to $\Lc$.  We will refer
to the flat
torus $T^*=\Lc^*\bs\R^m$ (again with the standard Euclidean metric) as the
\emph{dual torus}. Recall that:
  \begin{enumerate}
 \item $\vol(T^*)=\frac{1}{\vol(T)}$.
 \item The spectrum of $T$ is given by the collection $4\pi^2\|\gamma\|^2$ as
$\gamma$ varies over $\Lc^*$.
 \item The systole $\syst(T)$, i.e., the length of the shortest non-contractible
loop in $T$, is given by the length of the shortest non-zero element of $\Lc$. 
In particular, $\lambda_1(T)=4\pi^2 \syst(T^*)^2$.
 \end{enumerate}
 \end{notarem}

\begin{sy}\label{Thm:Minkowski}(See \cite{Gromov} and \cite{GL}.)
There exist constants $C_m$ and $C_m'$ depending only on $m$ such that for all
flat tori $T=\Lc\bs\R^m$, we have:
\begin{enumerate}
\item $\syst(T) \leq C_m \vol(T)^{\frac{1}{m}}, $
where $C_m$ is a constant depending only on the dimension $m$. 
\item There exists at least one choice of lattice basis
$\{\delta_1,\dots,\delta_m\}$ of $\Lc$ satisfying the conditions that  
$\|\delta_1\|=\syst(T)$ and that
$$\prod_{i=1}^m\,\|\delta_i\|\leq C_m'\vol(T).$$
\end{enumerate}
\end{sy}

The first statement and the remarks in~\ref{nota.invar} imply:

\begin{cor}\label{cor.syst}  Let $T$ be a flat $m$-torus. Then
$$\lambda_1(T)\leq C_m\frac{1}{\vol(T)^{\frac{2}{m}}},$$
where $C_m$ is a constant depending only on the dimension $m$. 
\end{cor}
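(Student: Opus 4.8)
The plan is to deduce the bound immediately from the first part of Minkowski's Theorem, applied not to $T$ itself but to the \emph{dual} torus $T^*$, combined with the elementary identities collected in~\ref{nota.invar}. Recall from there that $\lambda_1(T)=4\pi^2\syst(T^*)^2$ and that $\vol(T^*)=1/\vol(T)$; the point is simply that the fundamental tone of $T$ is governed by the length of the shortest nonzero vector of the dual lattice, to which a systolic inequality applies.

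First I would note that $T^*=\Lc^*\bs\R^m$ is again a full-rank flat $m$-torus, so Theorem~\ref{Thm:Minkowski}(1) applies to it verbatim, yielding a constant $C_m$ depending only on $m$ with $\syst(T^*)\leq C_m\,\vol(T^*)^{1/m}$. Substituting $\vol(T^*)=\vol(T)^{-1}$ gives
$$\syst(T^*)\leq C_m\,\vol(T)^{-1/m}.$$
Squaring this inequality and multiplying by $4\pi^2$, and using the identity $\lambda_1(T)=4\pi^2\syst(T^*)^2$, one obtains
$$\lambda_1(T)=4\pi^2\syst(T^*)^2\leq 4\pi^2 C_m^2\,\vol(T)^{-2/m}.$$
Renaming the constant $4\pi^2 C_m^2$ as $C_m$ (still depending only on $m$) then gives exactly the claimed estimate $\lambda_1(T)\leq C_m\,\vol(T)^{-2/m}$.

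Since every step is a direct substitution, there is no genuine obstacle here; the only care needed is the bookkeeping of which lattice each invariant refers to — spectrum and $\lambda_1$ are read off the dual lattice $\Lc^*$ while the systolic/volume input of Minkowski's Theorem is most naturally stated for $\Lc^*$ as well — and the observation that passing to the dual torus inverts the volume, which is precisely what converts the exponent $+1/m$ in the systolic bound into the exponent $-2/m$ appearing in the statement.
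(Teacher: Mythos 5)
Your argument is exactly the one the paper intends: it states that Corollary~\ref{cor.syst} follows from Minkowski's Theorem~\ref{Thm:Minkowski}(1) together with the identities in~\ref{nota.invar}, and your proof is precisely the application of the systolic bound to $T^*$ combined with $\lambda_1(T)=4\pi^2\syst(T^*)^2$ and $\vol(T^*)=1/\vol(T)$. Correct, and no different in substance from the paper's (implicit) proof.
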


\begin{rem}
In \cite{Urakawa} it is shown that the corollary above fails for a non-abelian 
compact Lie group equipped with a left-invariant metric.
\end{rem}

\begin{lem}\label{lem.pes}  Theorem~\ref{thm.sym2} holds with ``compact
symmetric spaces'' replaced by ``flat tori''.
 \end{lem}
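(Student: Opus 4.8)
The plan is to reduce the statement for flat tori to a counting argument using the Minkowski-type estimates of Theorem~\ref{Thm:Minkowski} together with the dictionary in \ref{nota.invar} relating the spectrum of a torus to the dual lattice. Fix $n$, $\lambda$, $v$, and suppose $(M,g)$ is a flat $n$-torus satisfying conditions (1)--(3) for a constant $A>1$ to be chosen. Write $M=\Lc\bs\R^n$ and let $\Lc^*$ be its dual lattice, so that $\Spec(M,g)$ consists of the numbers $4\pi^2\|\gamma\|^2$, $\gamma\in\Lc^*$. The fundamental tone condition $\lambda_1(M,g)\geq\lambda$ says $\syst(T^*)^2\geq\lambda/(4\pi^2)$, i.e. every nonzero vector of $\Lc^*$ has length at least $\sqrt{\lambda}/(2\pi)$; the volume condition $\vol(M,g)\geq v$ says $\vol(T^*)=1/\vol(M,g)\leq 1/v$. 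Since $\lambda_1(M,g)^{n/2}\vol(M,g)$ is a homothety invariant (Remark~\ref{Rem:InvHomothety}) and, by Corollary~\ref{cor.syst}, is bounded above by a constant $C_n$, we also get an upper bound on $\lambda_1(M,g)$ of the form $\lambda_1(M,g)\le C_n/v^{2/n}$, so $\lambda_1(M,g)\in[\lambda, C_n\lambda^{1-?}\dots]$ — more cleanly, $\lambda\le\lambda_1(M,g)\le C_n/\vol(M,g)^{2/n}\le C_n/v^{2/n}$, and since $\lambda^{n/2}v$ is among the prescribed data this is a bound of the form $\lambda_1(M,g)\le A_0\lambda$ for a constant $A_0$ depending only on $n$ and $\lambda^{n/2}v$.

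Now choose $A:=A_0$ (or any constant $\ge A_0$ depending only on $n$ and $\lambda^{n/2}v$). Then condition (3) forces $\lambda_1(M,g)\in E\cap[\lambda,A\lambda]$, so $\lambda_1(M,g)$ takes one of only finitely many values, say $\mu$; fix such a value and count the tori with $\lambda_1=\mu$. For such a torus, $\syst(T^*)=\sqrt{\mu}/(2\pi)$ is fixed, and by Theorem~\ref{Thm:Minkowski}(2) there is a lattice basis $\{\delta_1,\dots,\delta_n\}$ of $\Lc^*$ with $\|\delta_1\|=\syst(T^*)$ and $\prod_i\|\delta_i\|\le C_n'\vol(T^*)\le C_n'/v$. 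Hence every $\|\delta_i\|$ lies in the fixed interval $[\sqrt{\mu}/(2\pi),\,(C_n'/v)(2\pi/\sqrt{\mu})^{n-1}]$, a bounded interval depending only on the fixed data. The basis vectors $\delta_i$ therefore all lie in a fixed ball $B_R\subset\R^n$ of radius $R$ depending only on $n$, $v$, $\mu$. But the spectrum of $M$ — hence of $T^*$ via the length data — together with condition (3) constrains the \emph{set} of squared lengths realized by $\Lc^*$ in the window $[\sqrt{\lambda}/(2\pi),\sqrt{A\lambda}/(2\pi)]$ to lie inside the finite set $\frac{1}{2\pi}\sqrt{E/(4\pi^2)}$; in particular each $\|\delta_i\|$, if it lies in that window, takes finitely many values. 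The remaining point is that a lattice in $\R^n$ is determined up to isometry (congruence) by the Gram matrix of such a short basis, and the entries of that Gram matrix are controlled: diagonal entries are squared basis lengths (finitely many possibilities), and off-diagonal entries $\langle\delta_i,\delta_j\rangle$ are bounded in absolute value by $\|\delta_i\|\|\delta_j\|\le R^2$ and, because $\delta_i\pm\delta_j\in\Lc^*$, each such sum/difference again has squared length in a bounded range; if it falls in the spectral window it is pinned to $E$, and in any case the Gram matrices live in a bounded region of $\Symm_+(n,\R)$.

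To finish, I would invoke the standard fact — this is the essence of the finiteness of isospectral flat tori à la Wolpert/Pesce — that within this bounded family the congruence class of $\Lc^*$ (equivalently of $\Lc$, equivalently the isometry class of $M$) is determined by finitely much spectral data; concretely, the map sending a reduced Gram matrix to the torus is finite-to-one on each compact piece, and the compactness here follows from the length bounds just derived. Thus for each of the finitely many admissible values $\mu$ of $\lambda_1$ there are only finitely many isometry classes, and hence only finitely many in all. The main obstacle is the last step: passing from ``finitely many candidate short bases with controlled Gram matrices'' to ``finitely many isometry classes of tori compatible with the prescribed finite part $E$ of the spectrum.'' One must be careful that the spectral condition (3) is strong enough — this is exactly where the choice of $A$ large enough (so that the window $[\lambda,A\lambda]$ captures enough squared lengths of a Minkowski-reduced basis of $\Lc^*$, not merely the systole) does the work, and I would calibrate $A$ so that all of $\|\delta_1\|^2,\dots,\|\delta_n\|^2$ and the mixed lengths $\|\delta_i\pm\delta_j\|^2$ (suitably normalized by $4\pi^2$) lie in $[\lambda,A\lambda]$, whence they are forced into the finite set $E$ and the Gram matrix of the reduced basis is literally determined up to finitely many choices.
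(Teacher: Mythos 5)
Your proposal ultimately lands on the paper's argument: the isometry class of a flat torus $T=\Lc\bs\R^n$ is determined by the quantities $4\pi^2\|\delta_i\|^2$ and $4\pi^2\|\delta_i+\delta_j\|^2$ for a Minkowski-reduced basis $\{\delta_i\}$ of $\Lc^*$, these are all eigenvalues of $T$, and one chooses $A$ large enough (depending only on $n$ and $\lambda^{n/2}v$) that all of them fall in $[\lambda,A\lambda]$, hence are pinned to the finite set $E$. That is exactly the proof in the paper, which is modeled on Pesce's argument.

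However, there are two issues with the way you get there. First, your explicit choice ``$A:=A_0$'' is a false start: $A_0$ is calibrated only so that $\lambda_1(T)=4\pi^2\|\delta_1\|^2\le A_0\lambda$, which pins down only the shortest vector and not the full Gram matrix. You acknowledge this later (``the main obstacle''), but the fix you propose at the end is in fact the whole content of the lemma, not a refinement — the calibration must bound all $4\pi^2\|\delta_i\|^2$ and $4\pi^2\|\delta_i+\delta_j\|^2$ from the start, which the paper does directly by combining $\|\delta_j\|\ge\|\delta_1\|$ with Minkowski's product bound to get $4\pi^2\|\delta_i\|^2\le C\lambda_1(T)^{-(n-1)}\vol(T)^{-2}\le C(\lambda^{n}v^2)^{-1}\lambda$, and then doubling the constant to cover $\delta_i+\delta_j$. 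Second, your middle discussion — bounded Gram matrices, compactness, and an appeal to a ``finite-to-one'' map à la Wolpert — is unnecessary and risks circularity: once all the determining Gram data lie in the finite set $E$, finiteness is immediate and no compactness argument is needed. I'd cut the $A=A_0$ step and the compactness detour, and simply present the single calibration of $A$ together with the observation that a lattice is recovered up to congruence from $\{\|\delta_i\|,\|\delta_i+\delta_j\|\}$.
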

 
 Our proof of Lemma~\ref{lem.pes} closely follows H. Pesce's proof \cite{Pesce}
that there are at most finitely many tori with any given spectrum.

\begin{proof}[Proof of Lemma~\ref{lem.pes}.]  
A lattice $\Lc$ is determined up to congruence by 
$\|\alpha_i\|$, $i=1,\dots, n$ and $\|\alpha_i+\alpha_j\|$,
$1\leq i < j\leq n$, where $\{\alpha_1,\dots,\alpha_n\}$ is any lattice basis.
That is, if $\Lc'$ is another lattice, then $\Lc'$ is congruent to $\Lc$ if and
only if it admits a basis $\{\beta_1,\dots\beta_n\}$ with
$\|\alpha_i\|=\|\beta_i\|$ and $\|\alpha_i+\alpha_j\|=\|\beta_i+\beta_j\|$ for
all $i,j$. Since a pair of tori are isometric if and only if the dual tori
are isometric, a torus $T=\Lc\bs\R^n$ is determined up to isometry by the values
$4\pi\|\delta_i\|^2$, $1\leq i\leq n$ and $4\pi\|\delta_i+\delta_j\|^2$, $1\leq
i < j\leq n$ where $\{\delta_1,\dots, \delta_n\}$ is any lattice basis of
$\Lc^*$. 
Each of these values lies in the spectrum of $T$.  Choose the basis
$\{\delta_1,\dots, \delta_n\}$ of $\Lc^*$ as in Minkowski's Theorem so that
$4\pi\|\delta_1\|^2=\lambda_1(T)$ and $$\prod_{i=1}^n\,\|\delta_i\|\leq
C_n'\vol(T^*)=\frac{C_n'}{\vol(T)}.$$
Since $\|\delta_j\|\geq \|\delta_1\|$ for $j=1,\dots, n$, we then have for each
$i$ that 
$$\|\delta_i\|\leq C_n'\frac{1}{\|\delta_1\|^{n-1}\vol(T)}$$
and hence 
$$4\pi^2\|\delta_i\|^2\leq C\frac{1}{\lambda_1(T)^{n-1}\vol(T)^2},$$
where $C$ is a constant depending only on $n$.  
Replacing $C$ by $4C$, then the same bound is satisfied by 
$4\pi^2\|\delta_i+\delta_j\|^2$, $1\leq i <  j\leq n$. 
In particular, if $\lambda_1(T)\geq \lambda$ and $\vol(T)\geq v$, then
the elements $4\pi^2\|\delta_i\|^2$ and $4\pi^2\|\delta_i+\delta_j\|^2$, 
$1\leq i,j\leq n$ of $\Spec(T)$ all lie in the interval $[\lambda, A\lambda]$ 
where 
\begin{equation}\label{eq.A}
A= C\frac{1}{\lambda^{n}v^2}=C\frac{1}{(\lambda^{n/2}v)^2}.
\end{equation}
In particular, if $T$ satisfies the conditions of Theorem~\ref{thm.sym2}, then
each of $4\pi^2\|\delta_i\|^2$ and $4\pi^2\|\delta_i+\delta_j\|^2$ must lie in
the finite set $E$.  Thus, these values are determined up to finitely many
possibilities and hence so is the isometry class of $T$.
\end{proof}

\begin{proof}[Proof of Theorem~\ref{thm.sym2}.] 
By Lemma \ref{lem.finhom}, it suffices to prove the theorem with the
homogeneity type of $M$ fixed. We use the notation of~\ref{nota.sym}.  
Thus $M=\Gamma\bs(M_0\times M_1\times\dots\times M_k)$.  
Writing $M_i = G_i / K_i$, let $\Gamma_i$ be the  homomorphic image  
of $\Gamma\subset M_0\times G_1\times\dots\times G_k$ 
in $G_i$, $1\leq i\leq k$, and as in~\ref{nota.sym},
let $\Gamma_0$ be the homomorphic image of $\Gamma$ in $M_0$. 
Set $\omi =\Gamma_i\bs M_i$, $0\leq i\leq k$.  

In the notation of~\ref{nota.sym}, each  symmetric metric on $M$ is specified by
scaling factors $a_i$ (defining a metric $g_i$ on $M_i$), $i=1,\dots,k$, along
with a flat metric $g_0$ on $M_0$.   Given positive numbers $v$ and $\lambda$,
let $\M(\lambda,v)$ be the set of all such metrics satisfying
$\lambda_1(M,g)\geq\lambda$ and $\vol(M,g)\geq v$.  
Let $g\in\M(\lambda,v)$ and let $g_i$ denote the corresponding metric on $M_i$. 
Then the metric $g_i$ on $M_i$ induces a unique metric, denoted
$\overline{g_i}$, on $\omi$, $0\leq i\leq k$, 
so that $(M_i,g_i)\to(\omi,\overline{g_i})$ is a Riemannian covering. 
Since the projection $(M,g)\to (\omi,\overline{g_i})$ is a Riemannian submersion
with 
totally geodesic fibers it follows that $\E(\omi,\overline{g_i}) \subseteq
\E(M,g)$; 
hence, we see that  
\begin{equation}\label{eq.lo} 
\lambda_1(\omi,\ogi)\geq \lambda_1(M,g)\geq \lambda.
\end{equation}  

Write $V_i=\vol(\omi,\ogi)$, $m_i=\dim(M_i)$, and $n=\dim(M)=m_0+ m_1 + \cdots +
m_k$. 
For $i>0$, the metric $\ogi$ is uniquely determined by the scaling factor $a_i$
that
defines $g_i$.  By Remark~\ref{Rem:InvHomothety}, there exists a constant $c_i$,
independent of the scaling factor $a_i$ such that
\begin{equation}\label{eq.i}
\lambda_1(\omi,\ogi)^{\frac{m_i}{2}}V_i=c_i.
\end{equation}
Thus by Equation~\ref{eq.lo}, we have 
\begin{equation}\label{eq.i2}
V_i\leq \frac{c_i}{\lambda^{\frac{m_i}{2}}}.
\end{equation}
 By Corollary~\ref{cor.syst}, we similarly have 
\begin{equation}\label{eq.0}
V_0  \leq \frac{c_0}{\lambda^{\frac{m_0}{2}}},
\end{equation}
where $c_0$ is a constant depending only on the dimension $m_0$ of the torus
$M_0$.

Now 
\begin{equation}\label{eq.v}
\vol(M,g)=aV_0V_1\dots V_k,
\end{equation}
where 
$a=\frac{1}{|\Gamma|}|\Gamma_1| \cdots |\Gamma_k|$.
Since $\vol(M,g)\geq v$, Equations~\ref{eq.i2},~\ref{eq.0} and~\ref{eq.v} imply
for
$0\leq i\leq k$ that 
\begin{equation}\label{eq.all}
V_i\geq\frac{v}{aV_1\cdots \widehat{V}_i \cdots V_k}\geq 
\frac{c_i v\lambda^{\frac{n}{2}}}{ac\lambda^{\frac{m_i}{2}}}, 
\end{equation}
where $c=c_0c_1\cdots c_k$.  

For $1\leq i\leq k$, it follows from Equations~\ref{eq.i} and~\ref{eq.all} that 
$$(\frac{\lambda_1(\omi,\ogi)}{\lambda})^{\frac{m_i}{2}}\leq \frac{ac}{
v\lambda^{\frac{n}{2}}}$$
and thus 
\begin{equation}\label{eq.i3}
\frac{\lambda_1(\omi,\ogi)}{\lambda}\leq
(\frac{ac}{v\lambda^{\frac{n}{2}}})^{\frac{2}{m_i}}.
\end{equation}
Let $A_i$ be the expression on the right hand side of Equation~\ref{eq.i3}.
Then 
\begin{equation}\label{eq.ai}
\lambda_1(\omi,\ogi)\in [\lambda, A_i\lambda].
\end{equation}

Next consider $(\omn,\ogn)$. We apply Lemma~\ref{lem.pes}, with $v$ replaced by
the lower bound for the volume of $V_0$ given in Equation~\ref{eq.all} (with
$i=0$).  Note that $m_0$ plays the role of $n$ in the lemma.  We continue to use
$\lambda$ for the lower bound on $\lambda_1$. Letting $A_0$ be the expression in
Equation~\ref{eq.A} with our new lower bound on volume, we have
\begin{equation}\label{eq.a0}
A_0= \tilde{C}\frac{\lambda^{m_0}}{\lambda^{m_0}v^2\lambda^n}= 
\frac{\tilde{C}}{(\lambda^{n/2}v)^2},
\end{equation}
where $\tilde{C}$ is a constant depending only on the homogeneity type of $M$.  

Letting $$A=\max\{A_0,A_1,\dots, A_k\},$$ then the discussion above shows us
that 
for any $g\in\M(\lambda,v)$ we have $\E(M,g) \cap [\lambda , A \lambda] \neq
\emptyset$.
Now, let $E$ be as in the statement of the theorem and let
$\M(\lambda,v, E)$ be the collection of all symmetric metrics
$g\in\M(\lambda,v)$ such that $\E(M,g)\cap[\lambda, A\lambda]\subset E$.    For
each such $g$, we have by Equation~\ref{eq.ai} that $\lambda_1(\omi,\ogi)\in E$,
$1\leq i\leq k$.  Since $E$ is finite, there are only finitely many
possibilities for $\lambda_1(\omi,\ogi)$ and thus only finitely many
possibilities for $a_i$.  Next by Lemma~\ref{lem.pes} and the fact that $A\geq
A_0$, the isometry class of $\ogn$ is determined up to finitely many
possibilities. An elementary argument then shows that the $\Gamma_0$-equivariant
isometry class of $(M_0,g_0)$ is determined up to finitely many possibilities.
 The theorem now follows from the remarks in~\ref{nota.sym}(iii). 

\end{proof} 


\section{Naturally Reductive Metrics }\label{Sec:NatRed}

Let $(M,g)$ be a connected homogeneous Riemannian manifold.   Choose a base
point $o\in M$.  Let $H$ be a transitive group of isometries of $(M,g)$, and let
$K$ be the isotropy subgroup at $o$.  The Lie algebra $\mh$ of $H$  decomposes
into a sum $\mh=\germ{K}+\mq$, where $\germ{K}$ is the Lie algebra of $K$ and
$\mq$ is an $\Add(K)$-invariant complement of $\germ{K}$.  Given a vector $X \in
\mh$ we obtain a Killing field
$X^*$ on $M$ by 
$X_{p}^*  \equiv \frac{d}{dt}|_{t=0} \exp{tX}\cdot p$ for $p\in M$.
The map $X \mapsto X^*$ is an antihomomorphism of Lie algebras.
We may identify
$\mq$ with $T_{0}M$ by the linear map $X \mapsto X_{o}^*$.   Thus the
homogeneous Riemannian metric $g$ on $M$  corresponds to an inner product
$\langle \cdot , \cdot
\rangle$ on $\mq$.   For $X\in\mg$, write $X=X_\germ{K}+ X_\mq $ with
$X_\germ{K}\in \germ{K}$ and $X_\mq\in\mq$.   Recall that for $X,Y\in\mq$, 
$$(\nabla_{X^*}Y^*)_{o} = -\frac{1}{2}([X,Y]_{\mq}^*)_o + U(X,Y)^*_o,$$
where $U: \mq \times \mq \to \mq$ is defined by 
$$2<U(X,Y), Z> = \langle [Z, X]_{\mq}, Y \rangle + \langle X, [ Z,
Y]_{\germ{p}} \rangle.$$

\begin{dfn}\label{def.natred}
Let $(M,g)$ be a Riemannian homogeneous space and let $H$ be a transitive group
of isometries of $(M,g)$.
$(M,g)$  is said to be {\bf naturally reductive} (with respect to $H$), if
there exists an $\Ad(K)$-invariant complement $\mq$ of $\germ{K}$ (as above)
such that 
$$ \langle [Z, X]_\mq, Y \rangle + \langle X, [Z,Y]_\mq\rangle
=0,$$
or equivalently $U \equiv 0$. That is, for any $Z \in \mq$  the map $[Z,
\cdot ]_{\mq} : \mq \to \mq$ is skew symmetric with respect to
$\langle \cdot , \cdot \rangle$.
\end{dfn}

\begin{rem}\text{}
Naturally reductive metrics are a generalization of symmetric metrics.
Although the geodesic symmetries of naturally reductive metrics need not be
isometries, they are (up to sign) volume preserving.   Moreover, every geodesic
is an orbit of a one-parameter subgroup of the transitive group $H$.  In
particular, the
geodesics through the base point $o$ are precisely the curves $\exp(tX)\cdot~o$,
$X\in\mq$.
\end{rem}

\begin{thm}[\cite{DZ} Theorems 3 and 7]\label{thm:NatRed}
Let $G$ be a connected compact simple Lie group and let $g_0$ be the
bi-invariant Riemannian metric
on $G$ given by the negative of the Killing form. Let $K \leq G$ be a connected
subgroup with Lie algebra $\germ{K}$,  and let $\mp$ be a 
$g_{0}$ orthogonal complement of $\germ{K}$ in $\mg$.  Given any $\alpha\in \R$
and any bi-invariant Riemannian metric $h$ on $K$, define a left-invariant
metric $g$ on $G$ by the orthogonal direct sum
\begin{eqnarray}\label{Eq:NatRed}
g &=& e^{\alpha} g_{0} \upharpoonright \germ{p} \oplus h \upharpoonright
\germ{K}.\end{eqnarray}
 Then:
\begin{enumerate}
\item $g$ is $\Add(K)$-invariant and is naturally reductive with respect to
$G\times K$, where $G$ acts by left translations and $K$ by right translations
on $G$.

\item $g$ is normal homogeneous if and only if $h \leq e^{\alpha}g_0
\upharpoonright \germ{K}$.

\item Let $N_G(K)$ denote the normalizer of $K$ in $G$ and $N_G(K)^0$ denote the
connected component of the identity. Then the full connected compact isometry
group of $(G,g)$ is given by $G \times N_G(K)^0$, where $G$ acts via left
translations and $N_G(K)^0$ acts via right translations on $G$. The metric $g$
is therefore $N_G(K)$ bi-invariant and we conclude that the left cosets of
$N_G(K)^0$ are totally geodesic submanifolds of $(G,g)$.

\item Every left-invariant naturally reductive metric on $G$ is of the form
given in Equation~\ref{Eq:NatRed} for some  $K$, $\alpha$, and $h$.
\end{enumerate}
 \end{thm}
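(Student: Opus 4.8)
The plan is to prove the four assertions in turn, with (1)--(3) structural and (4) the substantive converse.

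\emph{Assertion (1).} I would realize $(G,g)$ as a reductive homogeneous space: let $H=G\times K$ act on $G$ by $(a,k)\cdot x=axk^{-1}$. This action is transitive, and provided $g$ is $H$-invariant the isotropy at $e$ is the diagonal subgroup $\{(k,k):k\in K\}\cong K$, with Lie algebra $\germ{K}^{\Delta}=\{(X,X):X\in\germ{K}\}$. Now $H$-invariance amounts to $\Add(K)$-invariance of $\langle\cdot,\cdot\rangle:=g|_{e}$ on $\mg=\mp\oplus\germ{K}$, which is immediate: $\mp$ is $\Add(K)$-stable, being the $g_{0}$-orthogonal complement of the subalgebra $\germ{K}$ with $g_{0}=-B$ bi-invariant; $g_{0}|_{\mp}$ inherits $\Add(K)$-invariance from $g_{0}$; and $h|_{\germ{K}}$ is $\Add(K)$-invariant since $h$ is bi-invariant on $K$. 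For natural reductivity I would take the $\Add(\germ{K}^{\Delta})$-invariant complement $\mm=\{(X,0):X\in\mp\}\oplus\{(0,Y):Y\in\germ{K}\}$ of $\germ{K}^{\Delta}$ in $\mh=\mg\oplus\germ{K}$, verify that the canonical identification $\mm\cong T_{e}G=\mg$ transports $\langle\cdot,\cdot\rangle$ to exactly $e^{\alpha}g_{0}|_{\mp}\oplus h|_{\germ{K}}$, and check $U\equiv 0$ directly: the bracket of $\mh$ is block-diagonal, the projection $\mh\to\mm$ along $\germ{K}^{\Delta}$ is explicit, and the skew-symmetry of $[Z,\cdot]_{\mm}$ then reduces to the $g_{0}$-skewness of $\ad(X)$ for $X\in\mg$ together with the $\Add(K)$-invariance of $h$. (Equivalently, one produces a possibly indefinite $\ad(\mh)$-invariant symmetric bilinear form on $\mh$ restricting to $\langle\cdot,\cdot\rangle$ on a suitable complement and invokes the standard Lie-algebraic criterion for natural reductivity; this is also the cleanest route to (2).)

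\emph{Assertion (2).} Normal homogeneity of $g$ means that, in such a presentation, $\langle\cdot,\cdot\rangle$ on the reductive complement is the restriction of an $\Add(H)$-invariant inner product $Q$ on $\mh$, the complement being the $Q$-orthocomplement of $\germ{K}^{\Delta}$. Since $G$ is simple, $\mg$ is $\Add(G)$-irreducible and is not $\Add(H)$-isomorphic to a subspace of $\germ{K}$, so Schur's lemma gives $Q=\beta\,g_{0}|_{\mg}\oplus Q_{\germ{K}}$ with $\beta>0$ and $Q_{\germ{K}}$ bi-invariant on $\germ{K}$; writing $Q_{\germ{K}}(X,Y)=g_{0}(\phi X,Y)$ with $\phi>0$, $g_{0}$-self-adjoint and $\Add(K)$-equivariant, I would compute the $Q$-orthocomplement of $\germ{K}^{\Delta}$ and the induced metric on $G$. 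Matching it against $g$ forces $\beta=e^{\alpha}$ and identifies the $g_{0}$-operator of $h|_{\germ{K}}$ with $e^{\alpha}(I+e^{\alpha}\phi^{-1})^{-1}$, which ranges over exactly the operators with spectrum in $(0,e^{\alpha})$ as $\phi$ ranges over admissible operators; thus $H=G\times K$ produces precisely the metrics with $h<e^{\alpha}g_{0}|_{\germ{K}}$. The boundary case $h=e^{\alpha}g_{0}|_{\germ{K}}$, i.e.\ $g=e^{\alpha}g_{0}$, is normal homogeneous with respect to $G$ itself, so altogether the condition is $h\le e^{\alpha}g_{0}|_{\germ{K}}$.

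\emph{Assertion (3).} The inclusion $\Isom(G,g)^{0}\supseteq G\times N_{G}(K)^{0}$ is the easy half. One first notes $N_{G}(K)^{0}=K\cdot Z_{G}(K)^{0}$, equivalently $\mathfrak{n}_{\mg}(\germ{K})=\germ{K}+\mathfrak{z}_{\mg}(\germ{K})$, because $\operatorname{Aut}(K)$ has inner identity component for $K$ compact connected. Hence the adjoint action of $N_{G}(K)^{0}$ on $\germ{K}$ equals that of $K$ and preserves $h$, while $N_{G}(K)^{0}$ also preserves $\mp=\germ{K}^{\perp_{g_{0}}}$; so $g$ is right-$N_{G}(K)^{0}$-invariant and $G\times N_{G}(K)^{0}$ acts by isometries, the statements about $N_{G}(K)$-bi-invariance and the totally geodesic cosets then being formal. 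The reverse inclusion is the delicate point: I would compute the Lie algebra of Killing fields of $(G,g)$. Right-invariant fields give the left-translation copy of $G$; a left-invariant field $X^{L}$ is Killing iff $\Add(\exp tX)$ preserves $g$, and --- choosing the decomposition so that $\germ{K}$ is minimal, whence the $g_{0}$-operator of $g$ has $e^{\alpha}$-eigenspace exactly $\mp$ --- commutation with this operator forces $\ad(X)$ to preserve $\germ{K}$, i.e.\ $X\in\mathfrak{n}_{\mg}(\germ{K})$; the converse inclusion uses $\mathfrak{n}_{\mg}(\germ{K})=\germ{K}+\mathfrak{z}_{\mg}(\germ{K})$ and $\Add(K)$-invariance of $h$. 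That there are no further Killing fields I would deduce from the known description of the full isometry algebra of a naturally reductive space, using that $g_{0}|_{\mp}$ is a multiple of the Killing form to identify the isotropy algebra at $e$.

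\emph{Assertion (4), the main obstacle.} Let $g$ be left-invariant and naturally reductive with respect to some transitive isometry group $H$, with reductive decomposition $\mh=\germ{K}'\oplus\mq$. Replacing $H$ by its identity component, I would use $G\subseteq H$ and the simplicity of $G$ to analyze $\mh$ as an extension of $\mg$ and normalize the presentation, the goal being to show that the skew-symmetry of $[\cdot,\cdot]_{\mq}$ forces the $g_{0}$-operator $\phi$ of $g$ (defined by $g|_{e}=g_{0}(\phi\cdot,\cdot)$) to be scalar, say $e^{\alpha}\mathrm{Id}$, on the $g_{0}$-orthogonal complement $\mp$ of a subalgebra $\germ{K}=\operatorname{Lie}(K)$, and $\Add(K)$-equivariant on $\germ{K}$ --- so that $h:=g_{0}(\phi|_{\germ{K}}\cdot,\cdot)$ is a bi-invariant metric on $K$ and $g$ has the asserted form, which by (1) is then naturally reductive with respect to $G\times K$. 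Concretely, natural reductivity should force the eigenspace decomposition of $\phi$ to be $\ad$-compatible in such a way that one eigenspace is the orthocomplement of a subalgebra and absorbs all of $\mp$. Extracting $\germ{K}$, $\alpha$, and $h$ from an abstract naturally reductive presentation --- in particular reducing $H$ to $G\times K$ --- is where the real work lies, and I expect to lean on Lie-theoretic structure results controlling how the simple algebra $\mg$ can sit inside the larger algebra $\mh$.
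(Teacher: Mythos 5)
This theorem is not proved in the paper; it is quoted from D'Atri and Ziller (Theorems~3 and~7 of \cite{DZ}), so there is no internal argument to compare against. Judged on its own, your attempt has a concrete flaw already at step~(1), and step~(4) is left as a plan rather than a proof.

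For (1), the naive $\Ad(\Delta K)$-invariant complement $\mm=\{(X,0):X\in\mp\}\oplus\{(0,Y):Y\in\germ{K}\}$ of $\Delta\germ{K}$ in $\mg\oplus\germ{K}$ does \emph{not} satisfy the naturally reductive condition in general. Take $Z=(Z_1,0)$ and $X=(X_1,0)$ with $Z_1,X_1\in\mp$, and $Y=(0,Y_2)$ with $Y_2\in\germ{K}$. Then $[Z,Y]=0$, while $[Z,X]=([Z_1,X_1],0)$, whose projection onto $\mm$ along $\Delta\germ{K}$ is $([Z_1,X_1]_\mp,\,-[Z_1,X_1]_{\germ{K}})$. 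Hence
\begin{equation*}
\langle[Z,X]_\mm,Y\rangle+\langle X,[Z,Y]_\mm\rangle=-h\bigl([Z_1,X_1]_{\germ{K}},Y_2\bigr),
\end{equation*}
which is nonzero in general (it vanishes for all choices only when $[\mp,\mp]\subset\mp$). The correct complement is \emph{twisted}: something of the form $\{(X,0):X\in\mp\}\oplus\{(Y,\psi Y):Y\in\germ{K}\}$ with $\psi$ determined by $\alpha$ and the operator $A_h$ of $h$; compare the subspace $\mathcal{H}$ constructed in Proposition~\ref{prop.beta} of this paper, which is precisely such a twist. Your parenthetical route via an $\ad(\mh)$-invariant symmetric bilinear form $Q$ on $\mg\oplus\germ{K}$ is the cleaner path (and is essentially what D'Atri--Ziller do), but you must actually construct $Q$ and take the $Q$-orthocomplement of $\Delta\germ{K}$; that orthocomplement is the twisted $\mm$, not the direct-sum one.

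For (4), you have written a research plan, not a proof: ``I expect to lean on Lie-theoretic structure results'' names the difficulty without resolving it. This converse is the substantive content of D'Atri--Ziller's Theorem~3; it requires normalizing the transitive isometry group and showing that the skew-symmetry condition forces the eigenspace decomposition of the metric operator to organize as $\mp\oplus\germ{K}$ for a \emph{subalgebra} $\germ{K}$, with the scalar $e^\alpha$ on the orthocomplement $\mp$. Nothing in your sketch explains why one eigenspace must be the $g_0$-orthocomplement of a subalgebra rather than an arbitrary $\Ad$-stable subspace, nor how the abstract transitive $H$ is reduced to $G\times K$.
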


\begin{notarem}\label{rem.natred} \text{}

\begin{enumerate}
\item\label{rem.MetricNot} We will denote a metric $g$ of the form given in
Equation~\ref{Eq:NatRed} by $g_{\alpha, h}$.

\item\label{rem.AltNatRed}  In the setting of Theorem \ref{thm:NatRed}, it
follows from the second statement of the theorem that the metric $g$ is also
naturally reductive with respect to $G\times L$ where $L$ is any connected
subgroup of $N_G(K)$ containing $K$. 

\item\label{rem.ConjNatRed}  If $g$ is naturally reductive with respect to
$G\times K$, then it is also naturally reductive with respect to $G\times
aKa^{-1}$ for any $a\in G$. 
Conjugating $K$ corresponds to changing the choice of base point for which $K$
is the isotropy group.

\item A Lie group $G$ can admit metrics naturally reductive with respect to $H
\times K$ where $H, K < G$, but which are not left-invariant \cite[p.
12--14]{DZ}. Such metrics are sometimes called semi-invariant.

\item\label{rem.GenNatRed} If $G$ is an arbitrary connected compact Lie group it
is known that left-invariant metrics of the form given by
Equation~\ref{Eq:NatRed} are naturally reductive, where we allow 
$g_0$ to denote \emph{any} bi-invariant metric on $G$.  (In the case that $G$ is
simple as in Theorem ~\ref{thm:NatRed}, all bi-invariant metrics are multiples
of each other.   Since $\alpha$ is arbitrary, no greater generality is achieved
by varying $g_0$.)  However, it is unknown
whether (up to isometry) this list is exhaustive (see \cite[Theorem 1 and p.
20]{DZ}). 
\end{enumerate}

\end{notarem}

\begin{prop}\label{prop.conj} Let $G$ be a compact Lie group.  Then
\begin{enumerate}
\item There are only finitely many conjugacy classes of semisimple subgroups of
$G$.

\item There are only finitely many conjugacy classes of subgroups of $G$ of the
form $TK$ where $K$ is trivial or connected and semisimple, and $T$ is a maximal
torus in
the centralizer of $K$ in $G$.
\end{enumerate}
\end{prop}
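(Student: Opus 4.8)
The plan is to reduce everything to the finiteness of conjugacy classes of semisimple (equivalently, for connected groups, of reductive-with-finite-center) subgroups, which is a known rigidity phenomenon. For part (1), I would first recall that it suffices to treat \emph{connected} semisimple subgroups: a general semisimple subgroup $S$ has only finitely many possibilities for its identity component $S^0$ up to conjugacy (granting the connected case), and once $S^0$ is fixed, $S$ is contained in the normalizer $N_G(S^0)$, whose component group is finite, so there are only finitely many possibilities for $S/S^0$ inside it and hence finitely many $S$ up to conjugacy. For the connected case, the key input is a rigidity theorem for subalgebras: a semisimple Lie subalgebra $\germ{s}\subset\mg$ is \emph{rigid}, i.e.\ all subalgebras in a neighborhood of $\germ{s}$ in the appropriate Grassmannian of subalgebras are conjugate to $\germ{s}$ under $\exp(\ad(\mg))$. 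This is exactly the content of Richardson's paper cited in the acknowledgments: the obstruction space $H^1(\germ{s},\mg/\germ{s})$ vanishes by Whitehead's lemma (semisimplicity of $\germ{s}$ plus complete reducibility of the $\germ{s}$-module $\mg/\germ{s}$, which holds since $G$ is compact). Thus the set of semisimple subalgebras of a given dimension, sitting inside a compact Grassmannian, is covered by finitely many conjugation-orbits; since $G$ has finitely many choices of dimension for $\germ{s}$, there are finitely many conjugacy classes of connected semisimple subgroups. (One should also note the standard point that a connected subgroup is determined by its Lie subalgebra, so conjugacy of subalgebras under inner automorphisms gives conjugacy of the connected subgroups.)

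For part (2), I would fix a connected semisimple $K$ (or $K$ trivial); by part (1) there are only finitely many such up to conjugacy, so it suffices to bound the number of conjugacy classes of subgroups of the form $TK$ for a \emph{fixed} $K$, where $T$ is a maximal torus in the centralizer $Z_G(K)$. Here the crucial observation is that $Z_G(K)$ is a fixed compact Lie group (depending only on the conjugacy class of $K$), and all its maximal tori are conjugate \emph{in} $Z_G(K)$ by the maximal torus conjugacy theorem. Since conjugation by an element of $Z_G(K)$ fixes $K$ pointwise, conjugating $T$ to a standard maximal torus $T_0\subset Z_G(K)$ carries $TK$ to $T_0 K$. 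Hence for each of the finitely many conjugacy classes of $K$ there is, up to conjugacy, exactly one subgroup $TK$ of the stated form, and part (2) follows.

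The main obstacle is part (1), and specifically having the rigidity statement in a clean enough form to conclude finiteness: one must ensure that "rigid subalgebra" genuinely yields that semisimple subalgebras of bounded dimension fall into finitely many conjugacy classes, which requires a compactness argument on the Grassmannian of $d$-dimensional subspaces of $\mg$ (the subalgebra condition is closed, the Grassmannian is compact, and rigidity says each semisimple subalgebra has a conjugation-invariant open neighborhood within which it is the unique conjugacy class of semisimple subalgebra — so the semisimple locus is covered by finitely many orbits). This is precisely why the authors invoke Richardson; the rest is bookkeeping with normalizers, centralizers, and the maximal torus theorem. I would present part (1) by citing \cite{Richardson} for the rigidity of semisimple subalgebras and then assembling the finiteness conclusion, and give part (2) in full since it is elementary once part (1) is in hand.
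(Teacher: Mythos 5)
Your core route is the same as the paper's: part~(1) rests on Richardson's rigidity theorem for semisimple subalgebras (the paper cites \cite[Prop.~12.1]{Richardson} directly, while you unpack it via Whitehead's lemma, $H^1(\germ{s},\mg/\germ{s})=0$, and compactness of the Grassmannian — a fair elaboration of the same input), and part~(2) follows from part~(1) together with compactness of $Z_G(K)$ and conjugacy of maximal tori in a compact group, exactly as in the paper.

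However, the side argument you add to extend part~(1) to \emph{disconnected} semisimple subgroups is incorrect. You claim that once $S^0$ is fixed, $S\subset N_G(S^0)$ has ``only finitely many possibilities for $S/S^0$'' because $N_G(S^0)$ has finite component group. But $N_G(S^0)/S^0$ is a compact group that may have positive dimension (its identity component contains $Z_G(S^0)^0/Z(S^0)$), and a positive-dimensional compact group has infinitely many conjugacy classes of finite subgroups. In fact, if ``semisimple subgroup'' only requires the Lie algebra to be semisimple, the statement is simply false for disconnected subgroups: in $G=\SU(2)\times\SU(2)$ the subgroups $\SU(2)\times\Z_n$ ($n\geq 1$) all have semisimple Lie algebra and are pairwise non-conjugate. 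The proposition, its proof in the paper, and its use in Corollary~\ref{cor.finite} all concern \emph{connected} semisimple subgroups (note that in part~(2), $K$ is explicitly required to be connected); you should read ``semisimple subgroup'' as ``connected semisimple subgroup'' and drop the disconnected reduction entirely.
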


\begin{proof}  The first statement follows directly from the fact
that up to conjugacy a real (or complex) Lie algebra contains finitely many
semi-simple Lie algebras (cf. \cite[Prop. 12.1]{Richardson}).
The second statement follows from the first statement, the fact
that the centralizer of any closed subgroup $K$ of $G$ is compact, and the
conjugacy of all maximal tori in any compact Lie group.
\end{proof}

\begin{cor}\label{cor.finite} Let $G$ be a connected simple compact Lie group. 
Then there exists a finite collection $\mathcal{K}$ of connected subgroups of
$G$ such that (up to isometry) every naturally reductive left-invariant metric
$g$ on $G$ is
naturally reductive with respect to $G\times K$ for some $K\in\mathcal{K}$.  The
collection $\mathcal{K}$ consists of a choice of representative from each of the
conjugacy classes of subgroups of $G$ given in Proposition \ref{prop.conj}(2).
Hence, the moduli space of naturally reductive metrics
$\mathcal{M}_{\operatorname{Nat}}(G)$ is given by 
$$\mathcal{M}_{\operatorname{Nat}}(G) = \cup_{K \in \mathcal{K}}
\mathcal{M}_{K}$$
where $\mathcal{M}_{K}$ is the space of metrics on $G$ naturally reductive with
respect to $G\times K$.  

\end{cor}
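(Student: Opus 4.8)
The plan is to combine the classification in Theorem~\ref{thm:NatRed} with the finiteness statement of Proposition~\ref{prop.conj}(2), the bridge being that the subgroup $K$ attached to a given naturally reductive left-invariant metric may always be replaced by a subgroup of the special form $TK'$ appearing in that proposition. So let $g$ be a naturally reductive left-invariant metric on $G$. By Theorem~\ref{thm:NatRed}(4) there exist a connected subgroup $K\leq G$, a real number $\alpha$, and a bi-invariant metric $h$ on $K$ with $g=g_{\alpha,h}=e^{\alpha}g_0\upharpoonright\mp\,\oplus\,h\upharpoonright\germ{K}$, where $\mp$ is the $g_0$-orthogonal complement of $\germ{K}$ in $\mg$.

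Next I enlarge $K$. Write $\germ{K}=\germ{z}\oplus[\germ{K},\germ{K}]$ as the sum of its center and its semisimple part; let $K_{\mathrm{ss}}$ be the (necessarily closed) connected subgroup of $G$ with Lie algebra $[\germ{K},\germ{K}]$, which is trivial or semisimple, and let $Z^0$ be the identity component of the center $Z(K)$, a torus. Since $Z^0$ and $K_{\mathrm{ss}}$ are connected with Lie algebras summing to $\germ{K}$ and $Z^0$ is central, $K=Z^0K_{\mathrm{ss}}$. As $Z^0$ centralizes $K\supseteq K_{\mathrm{ss}}$, it lies in the centralizer $C_G(K_{\mathrm{ss}})$, so I may choose a maximal torus $T$ of $C_G(K_{\mathrm{ss}})$ containing $Z^0$ and set $L:=TK_{\mathrm{ss}}$. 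Then $L$ is a connected compact subgroup with $K=Z^0K_{\mathrm{ss}}\subseteq L$; moreover $T$ centralizes $K_{\mathrm{ss}}$ by construction and centralizes $Z^0$ (being abelian and containing it), hence $T$ centralizes $K$, so $T\subseteq C_G(K)$, while $K_{\mathrm{ss}}\subseteq K\subseteq N_G(K)$, giving $L=TK_{\mathrm{ss}}\subseteq N_G(K)$. Crucially, $L$ is exactly a subgroup of the type considered in Proposition~\ref{prop.conj}(2): $K_{\mathrm{ss}}$ is trivial or connected semisimple and $T$ is a maximal torus in its centralizer.

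Now Remark~\ref{rem.natred}(\ref{rem.AltNatRed}) applies, since $L$ is a connected subgroup of $N_G(K)$ containing $K$: the same metric $g$ is naturally reductive with respect to $G\times L$. (Concretely, by Theorem~\ref{thm:NatRed}(3) the metric $g$ is $N_G(K)$-bi-invariant, hence $L$-bi-invariant, so $g\upharpoonright\germ{l}$ is a bi-invariant metric on $L$; and since the $g_0$-orthogonal complement $\germ{l}^{\perp}$ of $\germ{l}$ is contained in $\mp$, one has $g=e^{\alpha}g_0\upharpoonright\germ{l}^{\perp}\oplus(g\upharpoonright\germ{l})$, which is of the form $g_{\alpha,h'}$ relative to $L$.) By Proposition~\ref{prop.conj}(2) the subgroups of the form $TK'$ just described fall into finitely many conjugacy classes; let $\mathcal{K}$ consist of one representative from each. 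Writing $L=aK_0a^{-1}$ with $K_0\in\mathcal{K}$ and $a\in G$, Remark~\ref{rem.natred}(\ref{rem.ConjNatRed}) then gives that $g$ is naturally reductive with respect to $G\times K_0$, that is, $g\in\mathcal{M}_{K_0}$. Since conversely every metric in some $\mathcal{M}_K$ is naturally reductive and left-invariant, this yields the asserted identity $\mathcal{M}_{\operatorname{Nat}}(G)=\bigcup_{K\in\mathcal{K}}\mathcal{M}_K$.

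The only point requiring genuine, if routine, care is the group-theoretic bookkeeping of the second paragraph: that $K_{\mathrm{ss}}$ is closed, that $K=Z^0K_{\mathrm{ss}}$, and above all that the enlarged group $L$ is simultaneously contained in $N_G(K)$---so that Remark~\ref{rem.natred}(\ref{rem.AltNatRed}) is available---and of exactly the shape $TK'$ demanded by Proposition~\ref{prop.conj}(2). I do not anticipate a serious obstacle, as the corollary is essentially a synthesis of Theorem~\ref{thm:NatRed}, the remarks following it, and the finiteness already established in Proposition~\ref{prop.conj}.
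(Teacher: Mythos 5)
Your proof is correct and follows essentially the same route as the paper: decompose $K$ into its central torus and semisimple part, enlarge the torus to a maximal torus of the centralizer of the semisimple part, invoke Remark~\ref{rem.natred}(\ref{rem.AltNatRed}) and (\ref{rem.ConjNatRed}), and conclude by Proposition~\ref{prop.conj}(2). The only difference is notational (the paper calls the original subgroup $H$ and the enlarged one $K$), together with your somewhat more explicit verification that the enlarged group lies in $N_G(K)$.
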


\begin{proof}  Choose a connected subgroup $H$ of $G$ such that $g$ is of the
form given in Theorem \ref{thm:NatRed} with respect to $G\times H$.  Write
$H=H_{z}H_{ss}$, where $H_z$ is abelian and  $H_{ss}$ is semi-simple or trivial. 
Let $T$
be a maximal torus in the centralizer of $H_{ss}$ containing $H_z$, and let $K=
T H_{ss}$.  Note that $K$ is a connected subgroup such that $H\subset K\subset
N_G(H)$.  
Thus by Remark \ref{rem.natred}(\ref{rem.AltNatRed}), $g$ is naturally reductive
with respect to $G\times K$.  Finally, observe that, up to conjugacy, $K$ lies
in $\mathcal{K}$, so the corollary follows from Remark
\ref{rem.natred}(\ref{rem.ConjNatRed}).

\end{proof}


\section{Proof of the Main Theorem}\label{Sec:MainTheorem}

We are ready to prove the following theorem.

\begin{thm}\label{Thm:LocRigid}
Let $G$ be a connected compact simple Lie group and let
$\mathcal{M}_{\operatorname{Nat}}(G)$
denote the class of left-invariant naturally reductive metrics on $G$, where
$\mathcal{M}_{\operatorname{Nat}}(G) \subset GL(n,\R)$ has the subspace
topology. Then every metric $g \in \mathcal{M}_{\operatorname{Nat}}(G)$ is
spectrally isolated within this class. That is, there exists an open
neighborhood $U$ of $g$ in $\mathcal{M}_{\operatorname{Nat}}(G)$ such that if
$g' \in U$ and $\Spec(G,g') = \Spec(G,g)$, then $(G,g')$ and $(G,g)$ are
isometric. 
\end{thm}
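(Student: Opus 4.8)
The plan is to reduce the spectral isolation statement to two ingredients: (a) a description of the Laplace spectrum of a naturally reductive left-invariant metric $g_{\alpha,h}$ on $G$ in terms of the scaling datum $\alpha$ and the bi-invariant metric $h$ on $K$, and (b) a local rigidity argument using that the datum $(\alpha,h)$ is finite-dimensional and that the relevant spectral functions are real-analytic. By Corollary~\ref{cor.finite}, $\mathcal{M}_{\operatorname{Nat}}(G)$ is a finite union $\cup_{K\in\mathcal{K}}\mathcal{M}_K$, so it suffices to fix $K\in\mathcal{K}$ and show that each $g=g_{\alpha,h}\in\mathcal{M}_K$ has a neighborhood in $\mathcal{M}_K$ (one then patches finitely many neighborhoods, noting that a metric lying in several $\mathcal{M}_K$ simply gets intersected neighborhoods). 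Here I will use that $\mathcal{M}_K$ is parametrized by the pair $(\alpha, h)$ where $\alpha\in\R$ and $h$ ranges over the finite-dimensional cone of bi-invariant metrics on $K$ (a product of such cones over the simple and central factors of $\germ{K}$), so $\mathcal{M}_K$ is a smooth, even real-analytic, finite-dimensional manifold.

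Next I would compute $\Spec(G,g_{\alpha,h})$ via the Peter--Weyl theorem. Since $g_{\alpha,h}$ is $G\times K$ naturally reductive and in particular $G$-left-invariant and $K$-right-invariant, $L^2(G)$ decomposes under $G\times G$ as $\bigoplus_{\pi\in\hat G} V_\pi\otimes V_\pi^*$, and the Laplacian of $g_{\alpha,h}$ acts on each isotypic block as an explicit element of the universal enveloping algebra: writing $\mg = \germ{p}\oplus\germ{K}$ with the $g_0$-orthogonal splitting, the Laplacian is $e^{-\alpha}\,\mathrm{Cas}_{\germ{p}} + \mathrm{Cas}_{h,\germ{K}}$ where $\mathrm{Cas}_{\germ{p}}$ is the Casimir-type operator built from a $g_0$-orthonormal basis of $\germ{p}$ and $\mathrm{Cas}_{h,\germ{K}}$ is the (bi-invariant) Laplacian of $h$ on $K$ pulled up. Using $\mathrm{Cas}_{\germ{p}} = \mathrm{Cas}_{\mg,g_0} - \mathrm{Cas}_{\germ{K},g_0}$ and the fact that $\mathrm{Cas}_{\mg,g_0}$ acts on $V_\pi$ by the scalar $c_\pi := \langle\pi,\pi+2\rho\rangle$, each eigenvalue of $\Delta_{g_{\alpha,h}}$ has the form
\begin{equation}\label{eq.evs}
\mu(\pi,\sigma) = e^{-\alpha}\bigl(c_\pi - \kappa_\sigma\bigr) + h\text{-eigenvalue of }\sigma,
\end{equation}
where, for each $\pi\in\hat G$, $\sigma$ runs over the irreducible $K$-subrepresentations of $V_\pi|_K$, $\kappa_\sigma$ is the $g_0$-Casimir scalar of $K$ on $\sigma$, and the $h$-eigenvalue of $\sigma$ is a linear (in fact Casimir) function of the parameters defining $h$. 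The multiplicity of $\mu(\pi,\sigma)$ picks up a factor $\dim V_\pi$ from the $V_\pi^*$ tensor factor and the multiplicity of $\sigma$ in $V_\pi|_K$. The point is that the \emph{set} of eigenvalue-multiplicity data is completely determined by the finitely-presented combinatorial data $\{(c_\pi, \dim V_\pi, \{(\kappa_\sigma, \text{mult}, h\text{-weight of }\sigma)\}_{\sigma\subset V_\pi|_K})\}_{\pi\in\hat G}$ together with $(\alpha, h)$, and the dependence of each $\mu(\pi,\sigma)$ on $(\alpha,h)$ is real-analytic.

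Finally I would run the local-rigidity step. Suppose $g'=g_{\alpha',h'}\in\mathcal{M}_K$ is close to $g=g_{\alpha,h}$ and isospectral to it. From \eqref{eq.evs}, the bottom of the spectrum and a few low eigenvalues, together with their multiplicities (the $\dim V_\pi$ factors are rigid integers, so multiplicities separate the contributions of different $\pi$), let me recover $e^{-\alpha}$ and the individual $h$-eigenvalues on the relevant $\sigma$'s; since for $g'$ near $g$ the combinatorial data is the same and the parameters vary continuously, matching finitely many eigenvalue-with-multiplicity pairs forces $\alpha'=\alpha$ and forces the $h'$-Casimir scalars on a spanning set of $K$-representations to agree with those of $h$, which pins down $h'=h$ (a bi-invariant metric on $K$ is determined by its Casimir eigenvalues on, say, the adjoint representations of its simple factors plus the metric on its central torus, and this last is handled by the torus argument of Lemma~\ref{lem.pes}/Theorem~\ref{thm.sym2} applied to the toral factor of $K$ — this is exactly where one may still only get \emph{finitely many} $h'$, hence a punctured neighborhood containing none once we shrink $U$, rather than genuine uniqueness). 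Then $(\alpha',h')=(\alpha,h)$ gives $g'=g$, a fortiori isometric. The main obstacle is the central-torus part of $h$: the Laplacian only sees the torus metric through its length spectrum, which as the isospectral flat tori show is not a complete invariant, so the argument must extract from isospectrality only a \emph{finiteness} statement for the torus factor and then invoke openness to conclude isolation; handling the interplay between this torus ambiguity, the discrete parameter $K\in\mathcal{K}$, and the possible outer symmetries from $N_G(K)^0$ in Theorem~\ref{thm:NatRed}(3) (which identify some nearby $g_{\alpha,h}$'s isometrically and so must be quotiented out before claiming distinctness) is the delicate bookkeeping the proof will need to carry out carefully.
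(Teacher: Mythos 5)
Your route — expanding the Laplacian of $g_{\alpha,h}$ on the Peter--Weyl blocks as $e^{-\alpha}(\mathrm{Cas}_{\mg,g_0}-\mathrm{Cas}_{\germ{K},g_0})+\mathrm{Cas}_{\germ{K},h}$ and reading off eigenvalues $\mu(\pi,\sigma)=e^{-\alpha}(c_\pi-\kappa_\sigma)+(h\text{-eigenvalue of }\sigma)$ — is correct, and it is a genuinely different approach from the paper's. (Indeed your formula specializes, when $K$ is semisimple, exactly to the formula the paper derives in Section~\ref{Sec:SemiNatRed}.) The paper instead avoids any direct Casimir computation: it constructs, for each admissible $\beta$, a Riemannian submersion $\Phi\colon(G\times K,\tg_{\alpha,\beta}\times\hh)\to(G,g_{\alpha,h})$ with totally geodesic fibers (Proposition~\ref{prop.beta} and Corollary~\ref{cor.beta}), where the fiber metric $\hh=F(h)$ depends on $h$ via a \emph{nonlinear} homeomorphism, and then uses Peter--Weyl plus Frobenius reciprocity (Lemma~\ref{lemma.rep}) to exhibit, for each member $\gamma_p$ of a fixed complete set of invariants of $\mb(K)$, an eigenvalue of the form $\zeta_p+\gamma_p(\hh)$ in $\Spec(G,g_{\alpha,h})$ with $\zeta_p$ independent of $h$. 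That sets up a clean $\epsilon$--$\delta$ isolation argument. Both methods work; the submersion route buys a sharper ``affine in $\gamma_p(\hh)$'' dependence at the cost of the auxiliary construction, while your Casimir route is more transparent but requires you to (a) show the desired $(\pi,\sigma)$ pairs actually occur, via Frobenius reciprocity, and (b) separate eigenvalues perturbatively, exactly as in the paper's final step.

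Two soft spots worth flagging. First, your recovery of $\alpha$ from ``the bottom of the spectrum and a few low eigenvalues'' is left vague; the clean mechanism is hiding in your own formula: the $\sigma=\text{trivial}$ eigenvalues in \eqref{eq.evs} are exactly $e^{-\alpha}c_\pi$ with $[\pi|_K:\text{triv}]\neq 0$, i.e.\ they recover (a rescaling of) $\Spec(G/K,\gb)$, and this is precisely the paper's Lemma~\ref{lem.al}. Second, your concern that the central-torus factor of $K$ only yields \emph{finiteness} and so you ``must extract from isospectrality only a finiteness statement for the torus factor'' is a misreading of the situation: in Section~\ref{Sec:MainTheorem} the torus $K_0$ (its lattice) is fixed once and for all and only the inner product varies, and as noted in~\ref{nota.spec} (and in \cite{Pesce}) the quantities $b_j(h)$, $c_{jk}(h)$ then determine the flat metric \emph{uniquely}, not merely up to finite ambiguity. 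The lattice-varying finiteness phenomenon belongs to Section~\ref{Sec:BiInvariant}, not here. Relatedly, the $N_G(K)^0$ bookkeeping you worry about at the end is moot once this is corrected: one recovers the parameters $(\alpha,h)$ uniquely in a small enough neighborhood, hence $g'=g$ as metrics, which is stronger than the required ``isometric.''
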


\begin{lem}\label{lem.al}  Let $G$ be a connected compact simple Lie group. 
Using the notation of Theorem \ref{thm:NatRed} and of Corollary
\ref{cor.finite}, let  $K\in\mathcal{K}$ and  let $g=g_{\alpha,h}\in\mc_K$. 
Then there exists a neighborhood $W$ of $g$  in $\mc_K$ 
such that if $g'=g_{\alpha',h'}\in\mc_K$ is
isospectral to $g$, then $\alpha'=\alpha$.  
\end{lem}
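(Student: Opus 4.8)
The plan is to extract the scaling parameter $\alpha$ from a spectral invariant that varies strictly monotonically (and continuously) in $\alpha$ for fixed $K$ and fixed conjugacy class of $h$. Since $\mc_K$ is parametrized by the real number $\alpha$ together with the finitely-many-parameter choice of bi-invariant metric $h$ on $K$, and since $K$ ranges over a finite set $\mathcal{K}$, it suffices to find, near a given $g=g_{\alpha,h}$, a continuous function on $\mc_K$ of the form (spectral invariant) that pins down $\alpha$. The two most natural candidates are the total scalar curvature $\int_G \operatorname{scal}_{g'}\, d\vol_{g'}$ (a spectral invariant via the heat trace expansion, coefficient $a_1$) and the volume $\vol(G,g')$ (a spectral invariant via $a_0$), possibly in combination. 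First I would write down, using the explicit orthogonal splitting $g_{\alpha,h} = e^\alpha g_0\!\upharpoonright\!\mp \oplus h\!\upharpoonright\!\mathfrak{K}$, how $\vol(G,g_{\alpha,h})$ and $\int_G\operatorname{scal}\,d\vol$ depend on $\alpha$: the volume scales like $e^{(\dim\mp)\alpha/2}$ times a factor depending only on $h$, and the integrated scalar curvature is a sum of terms with different, explicitly computable powers of $e^\alpha$ (the bi-invariant part on $\mp$ contributes one power, the cross terms from the $U$-tensor / structure constants contribute others; since the metric is naturally reductive, $\operatorname{scal}$ is constant, so $\int\operatorname{scal}\,d\vol = \operatorname{scal}(g_{\alpha,h})\vol(G,g_{\alpha,h})$).

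The key steps, in order: (1) Record the dependence of $\vol(G,g_{\alpha,h})$ on $\alpha$ and $h$; since this is a spectral invariant, an isospectral $g'=g_{\alpha',h'}\in\mc_K$ satisfies $e^{(\dim\mp)\alpha/2} F(h) = e^{(\dim\mp)\alpha'/2} F(h')$, giving one relation between $(\alpha,h)$ and $(\alpha',h')$. (2) Do the same for $\int_G\operatorname{scal}\,d\vol$, obtaining a second relation; here the $h$-dependence and $\alpha$-dependence are genuinely different from those in the volume because the Killing-form-normalized pieces scale differently. (3) Combine the relations to conclude that the pair $(\alpha,[h])$ — where $[h]$ is the relevant finite-dimensional data for $h$ — is locally determined: concretely, form the ratio $(\int\operatorname{scal}\,d\vol)/\vol$, which equals $\operatorname{scal}(g_{\alpha,h})$, a spectral invariant, and analyze $\operatorname{scal}(g_{\alpha,h})$ as a function of $\alpha$ for fixed $h$. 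If $\partial_\alpha \operatorname{scal}(g_{\alpha,h})\neq 0$ at the given $g$, then $\operatorname{scal}$ alone, on a small enough neighborhood $W$, forces $\alpha'=\alpha$ once we also know $h'$ is close to $h$; and if the derivative happens to vanish, one falls back on using $\vol$ and $\int\operatorname{scal}\,d\vol$ jointly together with a higher heat coefficient ($a_2$, involving $\int |R|^2$, $\int|\operatorname{Ric}|^2$, $\int\operatorname{scal}^2$). (4) Handle the possibility that $g'$ is naturally reductive with respect to some other $K'\in\mathcal{K}$: by Corollary \ref{cor.finite} this is a finite list, and one restricts attention to a neighborhood $W$ small enough that it meets only those $\mc_{K'}$ with $g\in\mc_{K'}$; within each such component the same invariants apply.

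The main obstacle I expect is ruling out the degenerate case where the chosen low-order spectral invariants fail to separate $\alpha$ from $\alpha'$ — i.e., showing that $\alpha\mapsto\operatorname{scal}(g_{\alpha,h})$ (or the pair $\alpha\mapsto(\vol, \int\operatorname{scal}\,d\vol)$) is locally injective at \emph{every} $g_{\alpha,h}$, including at critical points of $\operatorname{scal}$. This requires an honest computation of $\operatorname{scal}(g_{\alpha,h})$ from the D'Atri–Ziller description: writing the Ricci/scalar curvature of the naturally reductive metric in terms of the structure constants of $\mg$ relative to the $g_0$-orthogonal splitting $\mg=\mathfrak{K}\oplus\mp$ and the eigenvalues of $h$ on the center and simple factors of $\mathfrak{K}$, one must check that $\operatorname{scal}$ is a non-constant real-analytic function of $\alpha$ whose only possible critical values are isolated, hence locally injective except possibly at a discrete set — and at that discrete set, bring in volume (which is strictly monotone in $\alpha$ for fixed $h$, being $e^{(\dim\mp)\alpha/2}$ up to an $h$-factor) to break the tie via a joint-invariant argument. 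Since $\dim\mp\geq 1$ (as $K\neq G$ when $\alpha\neq 0$; and when $K=G$ the metric is bi-invariant and there is nothing to prove), the volume is genuinely sensitive to $\alpha$, so the combined map $(\alpha,h)\mapsto(\text{finitely many heat coefficients})$ should be shown to be an immersion near the given point, which is exactly what local spectral isolation of the $\alpha$-coordinate requires.
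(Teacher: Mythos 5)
Your heat-invariant strategy has a genuine gap at its core. Every heat coefficient $a_j(g')$ is a smooth function on $\mc_K$, i.e.\ a function of the pair $(\alpha',h')$. Fixing finitely many such coefficients to equal their values at $(\alpha,h)$ cuts out, near $(\alpha,h)$, a subvariety on which $\alpha'$ will generically still vary freely, with $h'$ compensating. Your step (3) illustrates the problem precisely: from $\partial_\alpha\operatorname{scal}\neq 0$ you conclude that $\operatorname{scal}$ ``forces $\alpha'=\alpha$ once we also know $h'$ is close to $h$.'' That inference is incorrect. By the implicit function theorem, $\partial_\alpha\operatorname{scal}\neq 0$ means that for \emph{each} $h'$ near $h$ there is a \emph{unique} $\alpha'(h')$ near $\alpha$ with $\operatorname{scal}(g_{\alpha'(h'),h'})=\operatorname{scal}(g_{\alpha,h})$, and there is no reason for $\alpha'(h')$ to equal $\alpha$ unless $h'=h$. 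So $\operatorname{scal}$ alone (or $\vol$, or any single invariant) cannot isolate the $\alpha$-coordinate. Your fallback of stacking $a_0,a_1,a_2$ does not escape this either: the dimension of $\mb(K)$ can be arbitrarily large, and showing that the map $(\alpha,h)\mapsto(a_0,\dots,a_j)$ is locally injective would already prove the full isolation theorem from finitely many heat invariants --- a much stronger claim than the lemma, and one that you do not establish (nor does the paper attempt it; the rest of Section~4 uses quite different structure).

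The paper's proof sidesteps this entirely by finding a piece of spectral data that depends on $\alpha'$ \emph{but not on $h'$ at all}. Because $g'=g_{\alpha',h'}$ is $K$-bi-invariant, $\pi\colon (G,g')\to(G/K,\gb')$ is a Riemannian submersion with totally geodesic fibers, so $\Spec(G/K,\gb')\subset\Spec(G,g')$; and the induced metric $\gb'$ on $G/K$ is $e^{\alpha'-\alpha}\gb$, which depends only on $\alpha'$. Hence $\lambda_1(G/K,\gb')=e^{-(\alpha'-\alpha)}\lambda_1(G/K,\gb)$ is an eigenvalue of $(G,g')$, and if $0<|\alpha'-\alpha|$ is small this eigenvalue lies in a punctured neighborhood of $\lambda:=\lambda_1(G/K,\gb)$ containing no eigenvalues of $(G,g)$. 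Isospectrality then forces $\alpha'=\alpha$. The key idea your proposal misses is exactly this: project to $G/K$ to quotient out the $h$-dependence before extracting $\alpha$. (Also note: the hypothesis of the lemma already has $g'\in\mc_K$, so your step (4) about other $K'\in\mathcal K$ belongs to the proof of Theorem~\ref{Thm:LocRigid}, not here.)
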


\begin{proof}  Let $\pi:G\to G/K$ be the canonical projection.  For
$g'\in\mc_K$, the metric $\gb '$  on $G/K$ induced
by $g'$ depends only on $\alpha'$ and is a scalar multiple of the metric $\gb$
induced by $g$.     Since $g'$ is $K$-bi-invariant, the Riemannian submersion
$\pi:(G,g')\to (G/K,\gb')$ has totally geodesic fibers.  Thus the spectrum of
$(G/K,\gb')$ is contained in the spectrum of $(G,g')$. 
Let $\lambda$ be the lowest non-zero eigenvalue of $\Spec(G/K,\gb)$.  Choose
$\epsilon>0$ so that $\Spec(G,g)$ has no entries in the interval
$(\lambda-\epsilon,\lambda+\epsilon)$ other than $\lambda$.  For $g'\in\mc_K$,
the spectrum
of  $\gb'$ is just a rescaling of
$\Spec(G/K,\gb)$ with the scale factor depending non-trivially on $\alpha'$. 
Thus, we
can find $\delta>0$ so that if $0<|\alpha'-\alpha|<\delta$, then the lowest
non-zero eigenvalue of $\gb'$ lies in the interval 
$(\lambda-\epsilon,\lambda+\epsilon)$ and is distinct from $\lambda$.  But then
$g'$ cannot be isospectral to $g$.  The lemma follows.
\end{proof}

\begin{nota}\label{nota.h}  Let $G$ be a compact, simple, connected Lie group,
and let $g_0$ be the negative of the Killing form.  Given a connected compact
Lie subgroup $K$, let $\mb(K)$ denote the set of all bi-invariant metrics on
$K$.   For $h\in \mb(K)$, write $h(X,Y)=g_0(A_hX,Y)$.  The map $h\mapsto A_h$ is
a bijection between $\mb(K)$ and the set of all linear transformations
of $\germ{K}$ that commute with $\Add(K)$ and are positive-definite symmetric
(with respect to the restriction of $g_0$ to $\germ{K}$). We give $\mb(K)$ the
subspace
topology from the space of positive-definite symmetric linear transformations.  

For $h\in\mb(K)$, we will say that $\beta\in\R$ is {\bf \emph{admissible}} for
$h$ if
$e^\beta$ is strictly larger than each of the eigenvalues of $A_h$.  Given
$\beta\in\R$, we let $\Adm(K, \beta)$ be the set of all $h\in\mb(K)$ such that
$\beta$ is admissible for $h$.  Note that $\Adm(K, \beta)$ is an open subset of
$\mb(K)$,  

\end{nota}

\begin{prop}\label{prop.beta}    We use the notation of Corollary
\ref{cor.finite}, and Notation \ref{nota.h}.  Let  $K\in\mathcal{K}$ and let
$\beta\in\R$.  Define $\Phi: K \times K\to K$ by
$\Phi(k_1,k_2)=k_1k_2^{-1}$.  Then:

\begin{enumerate}
\item For each  $h\in \Adm(K,\beta)$, there exists a unique metric
$\hh\in\mb(K)$ for which $$\Phi:(K \times K, e^\beta g_0 \times \hh) \to (K,h)$$   
is a Riemannian submersion.
\item The mapping $F:\Adm(K, \beta)\to \mb(K)$ given by $h\mapsto\hh$ is a
homeomorphism from $\Adm(K, \beta)$ to an open subset of $\mb(K)$.
\end{enumerate}
\end{prop}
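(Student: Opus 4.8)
The plan is to turn the statement into a single linear-algebra computation at the identity, exploiting the homogeneity of $\Phi$. First I would record the shape of $\Phi$: its fibre over $e$ is the diagonal subgroup $\Delta K=\{(k,k):k\in K\}$, and $d\Phi_{(e,e)}(X,Y)=X-Y$, so $\Phi$ is a submersion with vertical space $\Delta\germ{K}$ at $(e,e)$. Next I would set up the relevant symmetry: $K\times K$ acts on $K\times K$ by left translations and on $K$ by $(a,b)\cdot x=axb^{-1}$, and $\Phi$ intertwines these two actions. For \emph{any} $\hh\in\mb(K)$ the product metric $e^\beta g_0\times\hh$ is left-invariant on $K\times K$, while $h$, being bi-invariant, is invariant under $x\mapsto axb^{-1}$; since left translations act transitively on $K\times K$, it follows that $\Phi$ will be a Riemannian submersion onto $(K,h)$ as soon as the horizontal isometry condition holds at the single point $(e,e)$, and conversely any $\hh\in\mb(K)$ for which $\Phi$ is a Riemannian submersion must satisfy that condition at $(e,e)$. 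So everything reduces to the point $(e,e)$.

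Now the computation. Write $\mu=e^\beta$, let $I$ be the identity of $\germ{K}$, and use the operators $A_h,A_{\hh}$ of Notation~\ref{nota.h}, so $\hh(U,V)=g_0(A_{\hh}U,V)$. A direct check shows that the $e^\beta g_0\times\hh$-orthogonal complement of $\Delta\germ{K}$ at $(e,e)$ is $\Hor=\{(X,-\mu A_{\hh}^{-1}X):X\in\germ{K}\}$, and on $\Hor$ one has $d\Phi(X,-\mu A_{\hh}^{-1}X)=SX$ with $S:=I+\mu A_{\hh}^{-1}$ positive-definite and $g_0$-symmetric. Comparing the $e^\beta g_0\times\hh$-length of $(X,-\mu A_{\hh}^{-1}X)$ with the $h$-length of $SX$, and using that $\mu S$ and $SA_hS$ are both $g_0$-symmetric (so that equality of the two quadratic forms is equality of the two operators), the Riemannian-submersion condition becomes $\mu S=SA_hS$, i.e.\ $A_h=\mu S^{-1}$, i.e.\ $A_{\hh}^{-1}=A_h^{-1}-\mu^{-1}I$, i.e.
\[
A_{\hh}=\bigl(A_h^{-1}-e^{-\beta}I\bigr)^{-1}.
\]
This right-hand side is $\Add(K)$-equivariant and $g_0$-symmetric because $A_h$ is, and it is positive-definite precisely when $A_h^{-1}-e^{-\beta}I>0$, that is, precisely when every eigenvalue of $A_h$ is less than $e^\beta$ --- which is exactly the hypothesis $h\in\Adm(K,\beta)$. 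This yields the existence and uniqueness of $\hh$, proving (1).

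For (2), the displayed formula exhibits $F$, under the identification of Notation~\ref{nota.h}, as $A_h\mapsto(A_h^{-1}-e^{-\beta}I)^{-1}$, which is continuous on $\Adm(K,\beta)$ since inversion is continuous on invertible operators. Solving the formula for $A_h$ yields the candidate inverse map $A_{\hh}\mapsto(A_{\hh}^{-1}+e^{-\beta}I)^{-1}$; this is a continuous expression which, applied to \emph{any} $\hh\in\mb(K)$, returns a positive-definite, $\Add(K)$-equivariant, $g_0$-symmetric operator all of whose eigenvalues are $<e^\beta$, hence an element of $\Adm(K,\beta)$ that $F$ sends to $\hh$. Thus $F$ is a homeomorphism of $\Adm(K,\beta)$ onto $\mb(K)$, in particular onto an open subset of $\mb(K)$.

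I expect the only delicate points to be: (a) the equivariance reduction --- checking that $\Phi$ intertwines the two $K\times K$-actions and that $e^\beta g_0\times\hh$ is left-invariant, which is what makes it legitimate to verify the submersion condition at a single point; and (b) the bookkeeping with $A_h$ and $A_{\hh}$ --- keeping $g_0$-symmetry in view so that equal quadratic forms give equal operators, keeping $\Add(K)$-equivariance in view so that the solution genuinely lies in $\mb(K)$, and getting the placement of $e^{\pm\beta}$ correct so that positivity of $A_h^{-1}-e^{-\beta}I$ matches admissibility of $\beta$ for $h$ exactly.
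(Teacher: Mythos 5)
Your proposal is correct and follows essentially the same route as the paper: reduce to the identity by invariance, compute the $q$-orthogonal complement of $\Delta\germ{K}$, and compare quadratic forms to obtain the relation $A_{\hh}=(A_h^{-1}-e^{-\beta}I)^{-1}$, equivalently $A_{\hh}=e^\beta(e^\beta A_h^{-1}-\mathrm{Id})^{-1}$ as in the paper. You make the equivariance reduction explicit where the paper leaves it implicit, and you in fact show $F$ is a homeomorphism onto all of $\mb(K)$, a mild strengthening of ``onto an open subset'' that costs nothing and is worth noting.
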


\begin{proof}  For $\Phi$ as in the  Proposition, the kernel of $\Phi_*$ is
given by the diagonal subspace $\Delta \germ{K} = \{(X,X):X\in\mk\}$.  For
$\hh\in\mb(K)$, the orthogonal complement $\mathcal{H}$ of $\Delta \germ{K}$ in
$\germ{K}\times\germ{K}$ with respect to $e^\beta g_0\oplus \hh $ is given by 
$$\mathcal{H}= \{(X,Y)\in\germ{K}\times\germ{K}: e^\beta X +A_{\hh}
Y=0\}=\{\tilde{X}:=(X,-e^\beta A_{\hh}^{-1}X):X\in\mk\}.$$  
Denoting the left-invariant metric $e^\beta g_0\oplus \hh$ by $q$, a simple
computation shows that for $\tilde{X},\tilde{Y}\in\mathcal{H}$, we have 
$$q(\tilde{X},\tilde{Y})=g_0(X,e^\beta DY)$$
where $D=Id +e^\beta A_{\hh}^{-1}$.  

Next $\Phi_*(\tilde{X})=X+e^\beta A_{\hh}^{-1}(X) =
DX$.  Thus  $\Phi_*:(\mathcal{H}, q)\to (\germ{K}, h)$
is an inner product space isometry if and only if  $DA_hD=e^\beta D$, i.e.,
$D=e^{\beta}A_h^{-1}$. 
The admissibility of $\beta$ implies that this transformation $D$ has all
eigenvalues strictly larger than one.  Thus given $h$, we may define $\hh$ by
the condition
$A_{\hh} =e^\beta(D-Id)^{-1}=e^\beta(e^{\beta}A_h^{-1}-Id)^{-1}$.  Note that
$A_{\hh}$ is positive-definite and symmetric and it commutes with $\Add(K)$, so
$\hh$ is a well-defined element of $\mb(K)$.   This proves statement (1). 
Statement (2) is also immediate.

\end{proof}

\begin{cor}\label{cor.beta} Let $G$ be a connected compact simple Lie group. 
Using the notation of Theorem \ref{thm:NatRed} and of Proposition
\ref{prop.beta}, let  $K\in\mathcal{K}$, let $\mp$ be an $\Add(K)$-invariant
complement of $\germ{K}$ in $\mg$,  and  let $\alpha,\beta\in\R$.  Let
$\tg=\tg_{\alpha,\beta}$ be the left-invariant naturally reductive Riemannian
metric on $G$ given by 
$$\tg=e^\alpha g_{0} \upharpoonright \mp \oplus e^\beta g_{0} \upharpoonright
\mk.$$  For $h\in \Adm(K, \beta)$,
define $\hh=F(h)$ as in Proposition~\ref{prop.beta}.  Then the map 
$$\Phi:(G \times K,\tg \times \hh) \to (G,g_{\alpha,h})$$ 
given by $(g,k)\mapsto
gk^{-1}$ is a Riemannian submersion.  Moreover, the fibers of this submersion
are totally geodesic.
\end{cor}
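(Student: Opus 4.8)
The plan is to factor the submersion $\Phi\colon(G\times K,\tg\times\hh)\to(G,g_{\alpha,h})$ through the intermediate space $(K\times K, e^\beta g_0\times\hh)$ where Proposition~\ref{prop.beta} already does the work on the $\mk$ factor. Concretely, consider the right action of $K\times K$ on $G\times K$ given by $(g,k)\cdot(k_1,k_2)=(gk_1,\,k_2^{-1}k k_1)$ (or a suitable variant), whose orbits are the fibers of $\Phi$; its kernel at the identity is the diagonal $\Delta\mk=\{(X,X):X\in\mk\}$ sitting inside $\mk\oplus\mk\subset\mg\oplus\mk$, since $d\Phi_{(e,e)}(X,Y)=X_{\mp}+X_{\mk}-Y$ has kernel $\{(X,X):X\in\mk\}$ once we note $d\Phi$ acts by the identity on the $\mp$-part. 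The $\mp$-directions in the domain are automatically horizontal and are carried isometrically (both sides being $e^\alpha g_0$ on $\mp$), so the only content is on the $\mk$-block, and there it is \emph{verbatim} the computation of Proposition~\ref{prop.beta}: with $q=e^\beta g_0\oplus\hh$ on $\mk\oplus\mk$ and $D=\mathrm{Id}+e^\beta A_{\hh}^{-1}$, the horizontal space is $\mathcal H=\{\tilde X=(X,-e^\beta A_{\hh}^{-1}X):X\in\mk\}$, and $d\Phi|_{\mathcal H}$ is an inner-product isometry onto $(\mk,h)$ exactly when $A_{\hh}=e^\beta(e^\beta A_h^{-1}-\mathrm{Id})^{-1}$, which is the defining relation of $\hh=F(h)$. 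Since the metrics and the map are left-$G$-invariant (and right-$K$-invariant in the appropriate sense), verifying the submersion property at one point suffices.

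The key steps, in order, are: (i) identify the fibers of $\Phi$ as the orbits of the right $K\times K$-action described above, and compute $\ker d\Phi_{(e,e)}=\Delta\mk$; (ii) decompose the tangent space $\mg\oplus\mk=(\mp\oplus 0)\;\oplus\;(\mk\oplus\mk)$ orthogonally with respect to $\tg\times\hh$ — note $\mp\oplus 0$ is orthogonal to $\mk\oplus\mk$ because $\tg$ splits orthogonally along $\mp\oplus\mk$ — and observe $d\Phi$ maps $\mp\oplus 0$ isometrically onto $(\mp,e^\alpha g_0)\subset T_e G$; (iii) on the $\mk\oplus\mk$ block invoke Proposition~\ref{prop.beta}(1) to conclude $d\Phi|_{\mathcal H}$ is an isometry onto the orthogonal complement of... wait, onto $(\mk,h)$, which together with (ii) gives that $d\Phi$ restricted to the horizontal space $(\mp\oplus 0)\oplus\mathcal H$ is an isometry onto $T_eG=(\mp,e^\alpha g_0)\oplus(\mk,h)=T_e(G,g_{\alpha,h})$; (iv) propagate to all points by $G$-equivariance; (v) for the totally geodesic fibers, note the fiber through $e$ is the $K\times K$-orbit, which is a copy of $K$ sitting in $G\times K$ as (say) $\{(k,k):k\in K\}\cdot$(stabilizer data) — more precisely the fiber is $\Delta K\cdot(G\times K)$-orbit, a homogeneous submanifold — and invoke the standard fact that for a Riemannian submersion arising from an isometric group action on a homogeneous space (here $G\times K$ acting on itself), the fibers, being orbits of a subgroup acting by isometries whose orbit through the base point is totally geodesic, are totally geodesic; alternatively, appeal directly to Theorem~\ref{thm:NatRed}(1)--(3), since $(G,g_{\alpha,h})$ is naturally reductive with respect to $G\times K$ and $G\times K$ acts on $G\times K$ with the quotient $(G,g_{\alpha,h})$, so the fibers are the left cosets of a subgroup conjugate into $N$ — in any case this is a soft argument once the submersion is established.

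The main obstacle is purely bookkeeping: pinning down the correct right action of $K\times K$ on $G\times K$ (equivalently, the correct identification of fibers and of the diagonal $\mk$ inside $\mg\oplus\mk$) so that the reduction to Proposition~\ref{prop.beta} is genuinely verbatim, and checking that the orthogonal splitting $T(G\times K)=(\mp)\oplus(\mk\oplus\mk)$ is respected by $d\Phi$ in the way claimed. There is also a small point to be careful about: one must confirm that the horizontal distribution of $\Phi$ is exactly $(\mp\oplus 0)\oplus\mathcal H$ — i.e. that $\mp\oplus 0$ really is $\tg\times\hh$-orthogonal to the fiber direction $\Delta\mk$, which follows since $\Delta\mk\subset\mk\oplus\mk$ and $\tg\times\hh$ is block-diagonal. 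Once these identifications are in place, every nontrivial inequality or eigenvalue estimate has already been done in Proposition~\ref{prop.beta}, and the totally-geodesic claim is immediate from Theorem~\ref{thm:NatRed}(3) applied on $G\times K$.
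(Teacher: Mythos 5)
Your argument for the submersion statement is essentially the paper's: the paper says this ``follows easily from Proposition~\ref{prop.beta},'' and your decomposition of the horizontal distribution into $(\mp\oplus 0)\oplus\mathcal{H}$, with the $\mp$-block handled trivially and the $\mk\oplus\mk$-block handled verbatim by the computation in Proposition~\ref{prop.beta}, is the right way to make that explicit. One small slip: the right $K\times K$-action you propose on $G\times K$ is free, so its orbit through $(e,e)$ is all of $K\times K$, which has dimension $2\dim K$; but $\ker d\Phi_{(e,e)}=\Delta\mk$ has dimension $\dim K$. The fibers are in fact the orbits of the right action of $\Delta K$ alone, $(g,k)\cdot a=(ga,ka)$ — equivalently, left cosets of $\Delta K$. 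That doesn't affect your horizontality computation, which you carried out correctly at the Lie-algebra level, but it matters for the second half of the statement.

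The totally-geodesic part is where your argument has a genuine gap. Your first argument (``fibers, being orbits of a subgroup acting by isometries whose orbit through the base point is totally geodesic, are totally geodesic'') is circular: you are asserting exactly what needs to be proved. Your ``alternatively'' appeal to Theorem~\ref{thm:NatRed}(3) does not apply either, since that theorem describes the isometry group and totally geodesic cosets of the base $(G,g_{\alpha,h})$, not of the total space $G\times K$, and the subgroup in question there is $N_G(K)^0$, not $\Delta K$. The paper's argument is different and clean: the metric $\tg\times\hh$ on $G\times K$ is bi-invariant with respect to the subgroup $K\times K$ (because $\tg$ is left $G$-invariant and $\Ad(K)$-invariant, while $\hh$ is bi-invariant on $K$), and the fibers are left cosets of the subgroup $\Delta K\leq K\times K$; cosets of a connected subgroup with respect to a left-invariant metric that is $\Ad$-invariant for that subgroup are totally geodesic, which finishes the proof. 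You should replace your step (v) with this observation.
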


\begin{proof}  The first statement follows easily from
Proposition~\ref{prop.beta}.  The assertion that the fibers are totally geodesic
follows from the facts that the metric on $G\times K$ is bi-invariant with
respect to the subgroup $K\times K$ and that the fibers are cosets in $G\times
K$ of $\Delta K \leq K\times K$.
\end{proof}

\begin{rep}\label{rept}  We briefly review the representation theory needed in
the proof of the main theorem.

(i)  If $G$ is a compact Lie group let $\widehat{G}$ denote the set of
equivalence classes of irreducible unitary representations of $G$. 
Since $G$ is compact we see that all of its irreducible representations are
finite dimensional.   
For $\mu\in\widehat{G}$ and $\sigma$ any unitary representation of $G$, let
$[\sigma:\mu]$ denote the  multiplicity of $\mu$ in $\sigma$.  (More generally,
for any representations $ \rho: G \to U(\mH_{\rho})$ and $\tau: G \to
U(\mH_{\tau})$, one defines 
$$[\rho: \tau] = \dim \{ T \in \Hom(\mH_{\tau} , \mH_{\rho}) : T \circ \tau(g) =
\rho(g) \circ T \mbox{ for all } g \in G \}.$$
The {\bf \emph{right regular representation}} of $G$ is the mapping $\rho_G: G
\to U(L^2(G))$, where 
$$\rho_G(g)f(x) = f(xg)$$
for any $f \in L^2(G)$.
The Peter-Weyl theorem (see \cite[p. 133]{Folland}) states that every
irreducible unitary
representation $\pi : G \to U(\mathcal{H}_{\pi})$ occurs in the right regular
representation $\rho_G$ of $G$ on $L^2(G)$; in fact $[\rho_G:\pi]=d_\pi$ where
$d_\pi$ is the dimension of $\pi$.    

 (ii) Let $G$ and $K$ be compact Lie groups.  Given two representations $\sigma:
G \to U(\mathcal{H}_{\rho})$ and $\tau: K \to U(\mathcal{H}_{\tau})$  we may
define the {\bf \emph{Kronecker product}} $\sigma \otimes \tau$ to be the
unitary
representation of $G \times K$ on $\mH_{\sigma} \otimes \mH_{\tau}$ given by
$\sigma\otimes\tau(g,k)(v\otimes w)=\sigma(g)v\otimes \tau(h)w$.  It is a
standard fact that the irreducible representations of $G\times K$ are precisely
the representations of the form $\sigma \otimes \tau$ where $\sigma \in
\widehat{G}$ and $\tau \in \widehat{K}$ and we can identify $\widehat{G \times
K}$ with $\widehat{G} \times \widehat{K}$ (see \cite[Theorem 7.25]{Folland}).
Hence, by the Peter-Weyl theorem,
$\sigma\otimes \tau$ occurs in the right regular representation $\rho_{G\times
K}$ of $G\times K$ on $L^2(G \times K)$.
In the case of finite-dimensional representations, we may identify $\mH_{\sigma}
\otimes \mH_{\tau}$ with $ \Hom(\mH_{\tau}^{*}, \mH_{\sigma})$ and then
\begin{equation}\label{eq.int}\sigma\otimes \tau (g,k) T := \sigma(g) \circ
T
\circ \bar{\tau}(k^{-1}),\end{equation}
for $T \in \Hom(\mH_{\tau}^{*}, \mH_{\sigma})$, where $\overline{\tau} :G\to
U(\mH_{\tau}^*)$ is the contragredient  of $\tau$ given by $\overline{\tau}(g) S
= S \circ \tau(g^{-1})$ for $g \in G$ and $T \in \mH_{\rho}^{*}$.  In the case
of interest to us, $K$ will be a compact subgroup of
$G$.  Let $\Delta K$ denote the diagonal subgroup of $G\times K$.  
Observe that if $T \in \Hom(\mH_{\tau}^{*}, \mH_{\sigma})$ is fixed by
$\sigma\otimes \tau(\Delta K)$,
then Equation \ref{eq.int} says that $T$ intertwines the contragredient
$\overline{\tau}$ of $\tau$ with the restriction of $\sigma$ to $K$.    In
particular, if $\tau$ is irreducible (and hence necessarily finite-dimensional),
then $\mH_{\sigma} \otimes \mH_{\tau}$ contains a non-trivial $\Delta K$-fixed
vector if and only if
the restriction of $\sigma$ to $K$ contains a  copy of $\overline{\tau}$; that
is, $[\sigma \upharpoonright K : \overline{\tau}] \neq
0$.\label{Fact:DeltaFixed2}

(iii) Let  $K$ be a connected compact Lie subgroup of the connected compact Lie
group $G$.  For $\tau\in\widehat{K}$, let $\Ind^{G}_{K}(\tau)$ denote the
representation of $G$ induced by $\tau$  (see \cite[p. 152]{Folland}), and for
$\sigma\in\widehat{G}$, let $\sigma \upharpoonright K$ denote the restriction of
$\sigma$ to $K$.    Then Frobenius' reciprocity theorem (see \cite[p.
160]{Folland}) 
shows us that $$[\sigma \upharpoonright K : \tau] = [
\Ind^{G}_{K}(\tau):\sigma].$$ 

(iv)   Again letting  $K$ be a connected compact Lie subgroup of the connected
compact Lie group $G$, every irreducible unitary representation $\tau$ of $K$
occurs in $\rho_G\upharpoonright K$.  Indeed, let $\sigma \in \widehat{G}$ be an
irreducible representation such that $[\Ind^{G}_{K}(\tau) : \sigma] \neq 0$.
Then by (iii), we have $[\sigma \upharpoonright K : \tau] \neq 0$ and thus by
(i), $[\rho_G\upharpoonright K:\tau]\neq 0$.

(v) Given a left-invariant Riemannian metric $g$ on a compact Lie group $G$,
then every 
$\rho_G$ invariant subspace of $L^2(G)$ is also invariant under the action of
the Laplacian
 of $G$.   If the metric is bi-invariant, then the Laplacian commutes with the
action $\rho_G$ and thus each $\rho_G$-irreducible subspace of $L^2(G)$ lies in
a single eigenspace.  More generally, if the metric is left $G$-invariant and
right $K$-invariant for some subgroup $K$ of $G$, then each
$\rho_G\upharpoonright K$ irreducible subspace of $L^2(G)$ lies in a single
eigenspace.
\end{rep}

\begin{invar}\label{nota.spec}  Let $K$ be a compact connected Lie group, and
let $\mathcal{B}(K)$ denote the set of all bi-invariant metrics on $K$.   We
define a complete set of invariants of the elements of $\mathcal{B}(K)$.

First, if $K$ is simple and $h\in\mb(K)$, let $\lambda_1(h)$ denote the lowest
nonzero
eigenvalue of the Laplacian of $h$.  Since all elements of $\mb(K)$ are scalar
multiples of each other, the map $\lambda_1:\mb(K)\to\R_+$ is bijective. 

Second, if $K$ is a torus, choose
and fix a Lie group isomorphism of $K$ with $\Z^m\bs\R^m$, thus realizing each
bi-invariant (i.e., flat) metric $h\in\mb(K)$
as an inner product on $\R^m$. (We note that this is different from 
our approach in Section~\ref{Sec:BiInvariant}, where we changed the geometry of
the torus by varying the lattice).  
Let $h^*$ be the dual inner product on the dual space $(\R^m)^*$, and 
let $Q^*$ be the associated quadratic form $Q^*(\gamma)=h^*_0(\gamma,\gamma)$.  
Letting $\{\delta_1,\dots,\delta_m\}$ be the standard basis of 
$(\R^m)^*$, set $b_j(h):=4\pi^2 Q^*(\delta_j)$ and $c_{jk}(h)=4\pi^2
Q^*(\delta_j+\delta_k)$ 
for $1\leq j<k\leq m$.   Then, as pointed out in \cite{Pesce}, $Q^*$, and thus
$h$, is
uniquely determined by the invariants $b_j(h)$ and $c_{jk}(h)$, $j,k=1,\dots,m$. 

For the general case, write $K=K_0K_1\dots K_r$, where $K_0$ is a
torus (the identity component of the center of $K$) and $K_1,\dots, K_r$ are
simple normal subgroups.  The various $K_i$'s may intersect but only in finite
central subgroups.  The homomorphic projections $\mk\to\mk_i$ give rise to
homomorphisms $\pi_i: K\to \kb_i$, where  $\kb_i$ is a compact Lie group
finitely covered by $K_i$ for $0\leq i\leq r$.  
Each bi-invariant metric $h$ on $K$ induces bi-invariant metrics $\qb_i$  on
$\kb_i$, and,  conversely, the  bi-invariant metrics $\qb_0,\qb_1,\dots, \qb_r$
uniquely
determine $h$.   For $i=1,\dots, r$, define $\lambda^i_1:\mb(K)\to\R_+$ by
$\lambda^i_1(h)=\lambda_1(\qb_i)$.   Choose
and fix a Lie group isomorphism of $\kb_0$ with $\Z^m\bs\R^m$ where
$m=\dim(K_0)$, and define $b_j(q)=b_j(\qb_0)$ and $c_{jk}(q)=c_{jk}(\qb_0)$ for
$j,k=1,\dots, m$.  

For notational convenience, we rename the full collection of invariants as
$\gamma_p:\mb(K)\to\R$, $p=1,\dots, s$, where $s=r+m+{m\choose 2}$, $r$ is the
number of simple factors in $K$ and $m$ is the dimension of the center of $K$.  

\end{invar}

\begin{prop}\label{prop.invar}  In the notation of \ref{nota.spec}, we have:

\begin{enumerate}
\item The collection of invariants $\gamma_p(h)$, for $1\leq p\leq s$, depend
continuously on $h$ and form a complete set of invariants of the elements of
$\mb(K)$. 
\item Each of these invariants lies in $\Spec(K,h)$.  Moreover, for each $p$,
the $\gamma_p(h)$-eigenspace contains a  $\rho_K$-invariant, irreducible
subspace $\mathcal{H}_p\subset L^2(K)$ that is independent of $h\in\mb(K)$. 
\item Each element of $\mb(K)$ is spectrally isolated within $\mb(K)$.
\end{enumerate}
\end{prop}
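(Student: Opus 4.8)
The plan is to prove assertions~(1)--(3) in turn, with~(2) carrying the real content and~(1),~(3) following more or less formally.

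\emph{Part~(1).} Completeness of the family $\{\gamma_p\}$ is essentially the content of~\ref{nota.spec}: a bi-invariant metric $h$ on $K$ is recovered from the metrics $\qb_0,\dots,\qb_r$ it induces on $\kb_0,\dots,\kb_r$; for $i\ge 1$ the metric $\qb_i$ on the simple group $\kb_i$ is a positive multiple $s_i$ of a fixed reference metric, hence is determined by $\lambda_1(\qb_i)=\lambda_1^i(h)$ (indeed $\lambda_1(\qb_i)=\lambda_1(\text{reference})/s_i$); and the flat metric $\qb_0$ on the fixed torus $\Z^m\bs\R^m$ is determined by the numbers $b_j(h),c_{jk}(h)$, since by polarization a quadratic form on $(\R^m)^*$ is recovered from its values at the $\delta_j$ and the $\delta_j+\delta_k$ (this is Pesce's observation, cf.~\cite{Pesce}). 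For continuity: $h\mapsto\qb_i$ is the restriction of a linear map, $s_i$ is a linear functional of $h$, and $h\mapsto(b_j(h),c_{jk}(h))$ is a rational function of the entries of $\qb_0$ on the open set of positive-definite forms; hence each $\gamma_p$ is continuous on $\mb(K)$.

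\emph{Part~(2).} Write $\mk=\mk_0\oplus\mk_1\oplus\dots\oplus\mk_r$ with $\mk_0$ the center and $\mk_1,\dots,\mk_r$ the simple ideals. First one checks that this decomposition is orthogonal with respect to \emph{every} bi-invariant metric $h$: for distinct $i,j\in\{1,\dots,r\}$ one has $[\mk_i,\mk_j]\subseteq\mk_i\cap\mk_j=0$, so for $X,X'\in\mk_i$ and $Y\in\mk_j$ skew-symmetry of $\ad_X$ gives $\langle[X,X'],Y\rangle=-\langle X',[X,Y]\rangle=0$, and since $[\mk_i,\mk_i]=\mk_i$ this forces $\mk_i\perp\mk_j$; similarly $\mk_0\perp[\mk,\mk]=\mk_1\oplus\dots\oplus\mk_r$ because $\ad_Z=0$ for $Z\in\mk_0$. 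Consequently, for each $i$ the homomorphism $\pi_i\colon K\to\kb_i$ induced by the projection $\mk\to\mk_i$ is a Riemannian submersion $(K,h)\to(\kb_i,\qb_i)$, and its fibers---cosets of the subgroup $\ker\pi_i$---are totally geodesic, because geodesics of a bi-invariant metric through $e$ are one-parameter subgroups. Hence, by the fact already invoked in the proofs of Theorem~\ref{thm.sym2} and Lemma~\ref{lem.al} that $\Delta_{(K,h)}(f\circ\pi_i)=(\Delta_{(\kb_i,\qb_i)}f)\circ\pi_i$ for a Riemannian submersion with minimal fibers, we get $\Spec(\kb_i,\qb_i)\subseteq\Spec(K,h)$, and $\pi_i^*$ carries each eigenspace of $(\kb_i,\qb_i)$ into the corresponding eigenspace of $(K,h)$, intertwining $\rho_{\kb_i}$ with $\rho_K$. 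For $i\ge 1$ this puts $\gamma_p(h)=\lambda_1^i(h)=\lambda_1(\qb_i)$ into $\Spec(K,h)$; moreover, since all bi-invariant metrics on the simple group $\kb_i$ are proportional, the $\lambda_1$-eigenspace $E_i\subset L^2(\kb_i)$ does not depend on the metric, so $\pi_i^*E_i$ is a fixed $\rho_K$-invariant subspace of $L^2(K)$ lying in the $\gamma_p(h)$-eigenspace, and we take $\mathcal{H}_p$ to be any $\rho_K$-irreducible subspace of it. For $\gamma_p$ coming from the torus factor, $\gamma_p(h)=b_j(h)=4\pi^2Q^*(\delta_j)$ (resp.\ $c_{jk}(h)$) is precisely the eigenvalue of the flat metric $\qb_0$ on the character $\chi_{\delta_j}(x)=e^{2\pi i\langle\delta_j,x\rangle}$ (resp.\ $\chi_{\delta_j+\delta_k}$); the line $\C\,\chi_{\delta_j}\subset L^2(\kb_0)$ is a fixed, one-dimensional, $\rho_{\kb_0}$-invariant (hence irreducible) subspace, and we set $\mathcal{H}_p=\pi_0^*(\C\,\chi_{\delta_j})$.

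\emph{Part~(3).} Fix $h_0\in\mb(K)$ and put $N=1+\max_p\gamma_p(h_0)$. The set $\E(K,h_0)\cap[0,N]$ is finite and (for $K$ nontrivial) contains at least $0$ and $\lambda_1(K,h_0)$; let $\epsilon>0$ be half of its smallest gap between distinct elements. By continuity of the finitely many functions $\gamma_p$ (part~(1)) choose a neighborhood $U$ of $h_0$ in $\mb(K)$ so that $\gamma_p(h)<N$ and $|\gamma_p(h)-\gamma_p(h_0)|<\epsilon$ for all $h\in U$ and all $p$. Now let $h\in U$ with $\Spec(K,h)=\Spec(K,h_0)$. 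By part~(2), each $\gamma_p(h)$ lies in $\Spec(K,h)=\Spec(K,h_0)$, hence in $\E(K,h_0)\cap[0,N]$; being within $\epsilon$ of $\gamma_p(h_0)$, it must equal $\gamma_p(h_0)$. So $\gamma_p(h)=\gamma_p(h_0)$ for all $p$, whence $h=h_0$ by completeness of the invariants (part~(1)); in particular $(K,h)$ and $(K,h_0)$ are isometric, which is the asserted spectral isolation. The main obstacle is part~(2): one must verify that the ideal decomposition of $\mk$ is $h$-orthogonal uniformly in $h$ (so that $\pi_i$ is a Riemannian submersion for \emph{every} bi-invariant metric), that its fibers are totally geodesic, and that the simple factors and the torus factor can be handled simultaneously while producing subspaces $\mathcal{H}_p$ that genuinely do not vary with $h$; granting this, part~(3) is the soft ``continuity of the invariants plus discreteness of the spectrum'' argument above, and part~(1) is bookkeeping together with Pesce's reconstruction of a flat torus from finitely many of its spectral values.
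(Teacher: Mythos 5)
Your proof is correct, and it fills in exactly what the paper leaves implicit: the paper disposes of parts~(1) and~(2) as ``straightforward consequences of~\ref{nota.spec}'' and of part~(3) as following formally from the first two, and your argument is precisely the natural expansion of that plan (orthogonality of the ideal decomposition for every bi-invariant metric, Riemannian submersions $\pi_i$ with totally geodesic fibers carrying eigenspaces into eigenspaces, characters on the torus factor, then continuity plus discreteness of the spectrum for isolation).
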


\begin{proof}  The first  two statements are straightforward consequences of
\ref{nota.spec}.   The third statement follows from the first two.
\end{proof}

\begin{lem}\label{lemma.rep} Let $G$ be a connected, compact simple Lie group
and let $K\in\mathcal{K}$.  Let $\alpha, \beta\in\R$.  For each $h\in
\Adm(K,\beta)$, let $\hh=F(h)$ as in Proposition~\ref{prop.beta} so that we have
a Riemannian submersion 
$\Phi:(G \times K,\tg_{\alpha,\beta} \times \hh ) \to (G,g_{\alpha,h})$ 
given by $\Phi(x,y)=xy^{-1}$.  (See Corollary~\ref{cor.beta}.) 
In the notation of \ref{nota.spec}, for each $p=1,\dots, s$, there exists
$\zeta_p\in
\Spec(G,\tg_{\alpha,\beta})$ such that $\zeta_p+ \gamma_p(\hh)\in
\Spec(G,g_{\alpha,h})$ for all $h\in\Adm(K,\beta)$.
\end{lem}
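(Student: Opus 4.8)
The plan is to produce, for each index $p$, an explicit eigenfunction of $(G,g_{\alpha,h})$ with eigenvalue $\zeta_p+\gamma_p(\hh)$ by pulling back a well-chosen eigenfunction through the Riemannian submersion $\Phi\colon(G\times K,\tg_{\alpha,\beta}\times\hh)\to(G,g_{\alpha,h})$, $\Phi(x,y)=xy^{-1}$, of Corollary~\ref{cor.beta}, using crucially that its fibers are totally geodesic. Fix $p\in\{1,\dots,s\}$. By Proposition~\ref{prop.invar}(2), the $\gamma_p(\hh)$-eigenspace of the Laplacian of $(K,\hh)$ contains a $\rho_K$-invariant, irreducible subspace $\mathcal{H}_p\subset L^2(K)$ that is \emph{independent of $\hh$}, hence independent of $h$; let $\pi_p\in\widehat{K}$ denote the class of the $\rho_K$-representation carried by $\mathcal{H}_p$.

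Next I would fix, once and for all, a subspace $V\subset L^2(G)$ that is invariant and irreducible under $\rho_G\upharpoonright K$, is equivalent as a $K$-representation to the contragredient $\overline{\pi_p}$, and lies in a single Peter--Weyl isotypic component of $L^2(G)$ (so that its elements are smooth functions). Such a $V$ exists by \ref{rept}(iv) together with Frobenius reciprocity (\ref{rept}(iii)), which guarantee that the irreducible representation $\overline{\pi_p}$ of $K$ occurs in $\rho_G\upharpoonright K$ on $L^2(G)$. Since $\tg_{\alpha,\beta}$ is left $G$-invariant and right $K$-invariant (Theorem~\ref{thm:NatRed}), \ref{rept}(v) shows that $V$ lies in a single eigenspace of the Laplacian of $(G,\tg_{\alpha,\beta})$; define $\zeta_p\in\Spec(G,\tg_{\alpha,\beta})$ to be the corresponding eigenvalue. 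Crucially, $V$ and therefore $\zeta_p$ depend only on $G$, $K$ and $p$ --- not on $h$.

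Now fix $h\in\Adm(K,\beta)$ and put $\hh=F(h)$. Viewing $V\otimes\mathcal{H}_p\subset L^2(G)\otimes L^2(K)=L^2(G\times K)$ and using that the Laplacian of the Riemannian product $\tg_{\alpha,\beta}\times\hh$ is $\Delta_{\tg_{\alpha,\beta}}\otimes 1+1\otimes\Delta_{\hh}$, we see that this operator acts on $V\otimes\mathcal{H}_p$ by the scalar $\zeta_p+\gamma_p(\hh)$. On the other hand, the right-translation action of the diagonal subgroup $\Delta K\le G\times K$ preserves $V\otimes\mathcal{H}_p$ and acts there as the tensor of $\rho_G\upharpoonright K$ on $V$ with $\rho_K$ on $\mathcal{H}_p$, i.e.\ the diagonal $K$-action on $\overline{\pi_p}\otimes\pi_p\cong\operatorname{End}(\mathcal{H}_{\pi_p})$, so $V\otimes\mathcal{H}_p$ contains a nonzero $\Delta K$-fixed vector $f$ (cf.\ \ref{rept}(ii)). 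By Corollary~\ref{cor.beta} the fibers of $\Phi$ are exactly the right cosets of $\Delta K$, so $f$ is constant along the fibers of $\Phi$, and, being smooth, $f=\Phi^{*}\overline{f}$ for some nonzero $\overline{f}\in C^{\infty}(G)$. Finally, since $\Phi$ is a Riemannian submersion with totally geodesic --- hence minimal --- fibers, $\Delta_{\tg_{\alpha,\beta}\times\hh}\circ\Phi^{*}=\Phi^{*}\circ\Delta_{g_{\alpha,h}}$; combined with the injectivity of $\Phi^{*}$ and $\Delta_{\tg_{\alpha,\beta}\times\hh}f=(\zeta_p+\gamma_p(\hh))f$, this gives $\Delta_{g_{\alpha,h}}\overline{f}=(\zeta_p+\gamma_p(\hh))\overline{f}$, so $\zeta_p+\gamma_p(\hh)\in\Spec(G,g_{\alpha,h})$. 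Since $h\in\Adm(K,\beta)$ was arbitrary and $\zeta_p$ was chosen beforehand, this proves the lemma.

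The step requiring real care --- and, I think, the heart of the argument --- is the \emph{uniformity in $h$}: one must commit to the pair $(V,\mathcal{H}_p)$, and hence to $\zeta_p$, before $h$ enters the picture. This is possible precisely because Proposition~\ref{prop.invar}(2) supplies an $h$-independent subspace $\mathcal{H}_p$ of $L^2(K)$ and because the metric $\tg_{\alpha,\beta}$ on $G$ does not involve $h$ at all, so that the entire $h$-dependence of the eigenvalue is concentrated in the single scalar $\gamma_p(\hh)$ coming from the $K$-factor. Producing a $\rho_G\upharpoonright K$-irreducible subspace $V\subset L^2(G)$ of the prescribed $K$-type is where Frobenius reciprocity and the Peter--Weyl theorem enter; the remaining ingredients --- the product formula for the Laplacian, the identification of fiber-constant functions with $\Delta K$-fixed vectors, and the descent of eigenfunctions through a submersion with minimal fibers --- are routine and are already used elsewhere in the paper.
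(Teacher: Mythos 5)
Your argument is correct and follows essentially the same route as the paper's proof: fix the $h$-independent $\rho_K$-irreducible subspace $\mathcal{H}_p$ from Proposition~\ref{prop.invar}(2), realize its contragredient inside $L^2(G)$ via \ref{rept}(iv), read off $\zeta_p$ from $K$-bi-invariance of $\tg_{\alpha,\beta}$ via \ref{rept}(v), extract a $\Delta K$-fixed vector in the tensor product via \ref{rept}(ii), and push it down through the totally-geodesic-fiber submersion of Corollary~\ref{cor.beta}. The only difference is that you spell out the $h$-uniformity and the Laplacian-intertwining step more explicitly than the paper does; both are correct and the substance is identical.
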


\begin{proof}  
Since the Riemannian submersion 
$\Phi:(G \times K,\tg_{\alpha,\beta} \times \hh) \to (G,g_{\alpha,h})$
has totally geodesic fibers given by (left) translates of
$\Delta K \leq G \times K$, we see that the spectrum of $(G,g_{\alpha,h})$
coincides with the spectrum of the Laplacian of $(G \times K,\tg_{\alpha,\beta}
\times \hh) \to (G,g_{\alpha,h})$ restricted to $\Delta K$-invariant functions
on $L^2(G \times K)$.  
Let  $\tau_p$ be the irreducible representation of $K$ on
$\mathcal{H}_p<L^2(K)$ given in Proposition~\ref{prop.invar} (2).    By
\ref{rept}(iv), the contragredient
$\overline{\tau}$ occurs in $\rho_G\upharpoonright K$, say with representation
space $\mathcal{H}_{\overline{\tau}} <L^2(G)$.   Since the metric
$\tg_{\alpha,\beta}$ on $G$ is $K$-bi-invariant, we have by \ref{rept}(v) that
$\mathcal{H}_{\overline{\tau}}$ lies in an eigenspace of the Laplacian of
$(G,\tg_{\alpha,\beta})$, say with eigenvalue $\zeta_p$.  By \ref{rept}(ii), 
there is a non-trivial $\Delta K$-fixed vector in $\mH_{\overline{\tau}}\otimes
\mH_p < L^2(G\times K)$.   This vector is a $(\zeta_p +
\gamma_p(\hh))$-eigenfunction on  $(G \times K,\tg_{\alpha,\beta} \times \hh)$; 
it is $\Delta K$-invariant and thus descends to an eigenfunction on
$(G,g_{\alpha,h}).$
\end{proof}

We now complete the proof of the Main Theorem.  Let $\mathcal{K}$ be as in
Corollary~\ref{cor.finite}.  In view of Corollary~\ref{cor.finite}, it suffices
to show that for each $K\in\mathcal{K}$, each metric in $\mc_K$ is spectrally
isolated within $\mc_K$.  Let $g=g_{\alpha,h}\in\mc_K$.   Choose a neighborhood
$W$ of $g$ in $\mc_K$ as in Lemma~\ref{lem.al} so that for  $g_{\alpha',h'}\in
W$, isospectrality of $g_{\alpha',h'}$ to $g$ implies $\alpha=\alpha'$.   Choose
$\beta\in\R$ so that $h\in\Adm(K,\beta)$.  Let $\hh=F(h)$ as in
Proposition~\ref{prop.beta}.  By Lemma~\ref{lemma.rep}, for each $p=1,\dots,s$,
there exists $\zeta_p\in\Spec(G,\tg_{\alpha,\beta})$ so that
$\zeta_p+\gamma_p(\hh')\in\Spec(G,g_{\alpha,h'})$ for all $h'\in\Adm(K,\beta)$. 
Choose $\epsilon>0$ so that for all $p=1,\dots,s$, the interval
$(\zeta_p+\gamma_p(\hh)-\epsilon, \zeta_p+\gamma_p(\hh)+\epsilon)$ contains no
eigenvalues in $\Spec(G,g_{\alpha,h})$ other than $\zeta_p+\gamma_p(\hh)$. 
Choose a neighborhood $V$ of $\hh$ in $\mb(K)$ so that for all $p$, we have
$|\gamma_p(\hh')-\gamma_p(\hh)|<\epsilon$ whenever $\hh'\in V$.  For $\hh'\in V$
distinct from $\hh$, we have $\gamma_p(\hh')-\gamma_p(\hh)\neq 0$ for at least
one choice of $p$, since the $\gamma_p$'s are a complete set of invariants. 
Thus letting $U=F^{-1}(V)$, we have $\Spec(G,g_{\alpha,h})\neq 
\Spec(G,g_{\alpha,h'})$ for all $h'\in U$.     Letting $\mathcal{O}=W\cap
\{g_{\alpha',h'}: \alpha'\in\R, h'\in U\}$, then $\mathcal{O}$ is an open
neighborhood  of $g$ in $\mc_K$ all of whose elements other than $g$ itself
have spectrum different from that of $g$.  This completes the proof.

\begin{rem}
Although we have focused on left-invariant naturally reductive metrics on a
compact simple Lie group, similar arguments demonstrate that for an arbitrary
compact Lie group $G$ metrics of the form given by Theorem~\ref{thm:NatRed} are
spectrally isolated within this class (see Remark~\ref{rem.natred}
(\ref{rem.GenNatRed})).
\end{rem}


\section{The Spectra of Semisimple Naturally Reductive
Metrics}\label{Sec:SemiNatRed}

It is rare that one is able to explicitly compute the spectra of Riemannian
manifolds. In this section we outline the computation of the spectrum of a class
of 
left-invariant naturally reductive metrics on compact simple Lie groups.   The
class of metrics we consider are the ``semisimply naturally reductive metrics''
that  we now define.

\begin{dfn}
A left-invariant naturally reductive metric $g$ on a compact Lie group $G$ will
be said to be {\bf semisimply naturally reductive}  if $g$ is naturally
reductive  with respect to $G \times K$ for some semisimple subgroup  $K \leq
G$. 
\end{dfn}

\begin{rem}\label{rem.h}
In the notation of Theorem~\ref{thm:NatRed}, suppose $G$ is a compact simple Lie
group and the metric $g = g_{\alpha, h}=e^{\alpha} g_{0} \upharpoonright
\germ{p} \oplus h \upharpoonright \germ{K}$ is naturally reductive  with respect
to $G\times K$ for some semisimple subgroup $K \leq G$.  Since the metric $h$ on
$K$ is bi-invariant, we necessarily have 
$$h =  e^{\alpha_1} g_0 \upharpoonright \germ{K}_1 \oplus \cdots \oplus
e^{\alpha_r} g_0 \upharpoonright \germ{K}_r,$$ 
where $\germ{K}$, the Lie algebra of $K$, is the direct sum of simple factors 
$\germ{K}_1, \ldots , \germ{K}_r$.

If some $\alpha_{i} = \alpha$, then the factor $\germ{K}_{i}$ can be absorbed
into $\germ{p}$.  Thus we will always assume that $\alpha_i\neq \alpha$ for all
$i=1,\dots,r$.

\end{rem}

The next result shows that for a simple Lie group $G$ every left-invariant
semisimply naturally reductive metric can be
realized as the base space of a semi-Riemannian submersion (with totally
geodesic fibers), where the total space is a compact Lie group equipped with a
\emph{bi-invariant} semi-Riemannian metric. This coupled with
Theorem~\ref{Thm:Casimir} below will allow us to explicitly compute
the spectra of left-invariant semisimply naturally reductive metrics on simple
Lie groups.  Recall that a metric $g$ on a manifold $M$ is said to be
semi-Riemannian if for each $p \in M$ we have that $g_p : T_{p}M \times T_{p} M
\to
\R$ is symmetric and non-degenrate.

\begin{prop}\label{prop:SemiRiem}
Let $G$ be a compact simple Lie group, let $K \leq G$ be a closed semisimple
subgroup, let $g=g_{\alpha,h}$ be a left-invariant metric on $G$ naturally
reductive with respect to $G\times K$, and let $\alpha, \alpha_1,\alpha_r$ be as
in Remark \ref{rem.h}.   Let $\hh$ be
the bi-invariant semi-Riemannian metric on $K$ given by $$\hh = \beta_{1}
g_{0} \upharpoonright \germ{K}_{1} \oplus \cdots \oplus \beta_{r}  g_{0}
\upharpoonright \germ{K}_{r} \;\;(
\beta_{i} = \frac{e^{\alpha_i }}{1-e^{(\alpha_i -\alpha)}}  =
\frac{e^{\alpha}}{e^{(\alpha - \alpha_i)} -1 }) .$$
Then the map $$\Phi: (G \times K, e^{\alpha} g_{0} \times \hh ) \to
(G,g_{\alpha,h})$$ given by 
$$(g,k) \mapsto gk^{-1}$$
is a semi-Riemannian submersion with totally geodesic fibers. 
\end{prop}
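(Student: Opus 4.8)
The plan is to mimic the proof of Corollary~\ref{cor.beta}, replacing the Riemannian submersion of Proposition~\ref{prop.beta} by its semi-Riemannian analogue on each simple factor of $K$. First I would set up the orthogonal (now with respect to a possibly indefinite form) decomposition of $\germ{K}\times\germ{K}$ with respect to $e^\alpha g_0\oplus\hh$ into the diagonal $\Delta\germ{K}=\ker\Phi_*$ and a complement $\mathcal{H}$, working factor by factor: on $\germ{K}_i\times\germ{K}_i$ the metric is $e^\alpha g_0\oplus\beta_i g_0$, so $\mathcal{H}_i=\{(X,-\tfrac{e^\alpha}{\beta_i}X):X\in\germ{K}_i\}$. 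The key point is that since $\alpha_i\neq\alpha$ for all $i$ (Remark~\ref{rem.h}), each $\beta_i$ is a well-defined nonzero real number and the restriction of $e^\alpha g_0\oplus\hh$ to $\mathcal{H}_i$ is nondegenerate; indeed a direct computation as in Proposition~\ref{prop.beta} gives, for $\tilde X=(X,-\tfrac{e^\alpha}{\beta_i}X)$ and $\tilde Y$ similarly, the value $e^\alpha(1+\tfrac{e^\alpha}{\beta_i})\,g_0(X,Y)$ on $\germ{K}_i$.

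Next I would verify that $\Phi_*$ restricted to $\mathcal{H}$ is an isometry onto $(\germ{K},h)$ where $h$ has scaling factor $e^{\alpha_i}$ on $\germ{K}_i$. One has $\Phi_*(\tilde X)=X+\tfrac{e^\alpha}{\beta_i}X=(1+\tfrac{e^\alpha}{\beta_i})X$ on the $i$-th factor, so the induced form on $\germ{K}_i$ is $e^\alpha(1+\tfrac{e^\alpha}{\beta_i})/(1+\tfrac{e^\alpha}{\beta_i})^2 = e^\alpha/(1+\tfrac{e^\alpha}{\beta_i})$ times $g_0$. The choice $\beta_i=e^{\alpha_i}/(1-e^{\alpha_i-\alpha})$ is precisely the one that makes this equal $e^{\alpha_i}$: substituting gives $1+\tfrac{e^\alpha}{\beta_i}=1+\tfrac{e^\alpha(1-e^{\alpha_i-\alpha})}{e^{\alpha_i}}=1+e^{\alpha-\alpha_i}-1=e^{\alpha-\alpha_i}$, hence $e^\alpha/(1+\tfrac{e^\alpha}{\beta_i})=e^\alpha e^{\alpha_i-\alpha}=e^{\alpha_i}$, as desired. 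So $\Phi_*|_{\mathcal{H}_i}:(\mathcal{H}_i,e^\alpha g_0\oplus\hh)\to(\germ{K}_i,e^{\alpha_i}g_0)$ is an isometry of inner product spaces, and since $\Phi$ has constant rank and both metrics are homogeneous, this suffices to conclude that $\Phi$ is a semi-Riemannian submersion.

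Finally, the totally geodesic fiber claim follows verbatim from the argument in Corollary~\ref{cor.beta}: the metric $e^\alpha g_0\times\hh$ on $G\times K$ is bi-invariant under the subgroup $K\times K$ (each $g_0\!\upharpoonright\!\germ{K}_i$ and hence each $\beta_i g_0\!\upharpoonright\!\germ{K}_i$ is $\Ad(K)$-invariant, and $e^\alpha g_0\!\upharpoonright\!\germ{p}$ is $\Ad(K)$-invariant by Theorem~\ref{thm:NatRed}), the fibers of $\Phi$ are the left cosets of $\Delta K\leq K\times K$, and left cosets of a subgroup that sits inside a subgroup under which a bi-invariant metric is invariant are totally geodesic. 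The main obstacle — really the only subtlety compared with the Riemannian case of Proposition~\ref{prop.beta} — is the nondegeneracy bookkeeping: one must check that for every simple factor the coefficient $e^\alpha(1+e^\alpha/\beta_i)=e^{2\alpha-\alpha_i}\neq 0$, so that $\mathcal{H}$ is genuinely complementary to the (also possibly null-containing) diagonal and the restricted form on $\mathcal{H}$ is nondegenerate; this is exactly where the hypothesis $\alpha_i\neq\alpha$ is used, and it is harmless since no positivity is required of the total-space metric. I would state these sign computations cleanly factor-by-factor and leave the remaining verifications as routine.
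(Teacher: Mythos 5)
Your argument is correct and follows essentially the same route as the paper's proof, which likewise mimics Proposition~\ref{prop.beta} after noting that $e^\alpha$ is not an eigenvalue of $A_h$; you simply carry it out explicitly factor by factor (eigenvalues $e^{\alpha_i}$ of $A_h$, hence $\beta_i = e^\alpha/(e^{\alpha-\alpha_i}-1)$) rather than in the operator notation $A_{\hh}=e^\alpha(e^\alpha A_h^{-1}-\mathrm{Id})^{-1}$. One small slip in your exposition: the coefficient $e^{2\alpha-\alpha_i}$ you exhibit on $\mathcal{H}_i$ is nonzero for \emph{all} real $\alpha,\alpha_i$, so that is not where $\alpha_i\neq\alpha$ enters; the hypothesis is used earlier to make $\beta_i$ a well-defined nonzero scalar (equivalently, to make $\hh$ nondegenerate, or to make the restriction of $e^\alpha g_0\oplus\hh$ to $\Delta\germ{K}_i$, whose coefficient is $e^\alpha+\beta_i=e^{2\alpha-\alpha_i}/(e^{\alpha-\alpha_i}-1)$, nondegenerate) — once $\beta_i$ is defined, nondegeneracy of $\mathcal{H}_i$ is automatic.
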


\begin{rems}
The submersion $\Phi$ will be Riemannian if $ \alpha_{i}< \alpha$ for each
$i$. Otherwise, it will be only semi-Riemannian.

In Corollary~\ref{cor.beta} we realized every naturally reductive metric
$g_{\alpha,h}$ on a simple Lie group $G$ as the base space of a Riemannian
submersion $\Phi: (G \times K, \tilde{g} \times \tilde{h}) \to (G,g)$, where the
metric $  \tilde{g} \times \tilde{h}$ is not necessarily bi-invariant. To obtain
a bi-invariant metric on the total space, as in Proposition~\ref{prop:SemiRiem},
we need to allow semi-Riemannian metrics. 
\end{rems}

\begin{proof}
Write $h(X,Y) = g_0(A_h X, Y)$, where $A_h: \germ{K} \to \germ{K}$ is a
$g_0$-self-adjoint linear transformation (with non-zero eigenvalues).  Following
the notation of \ref{nota.h} we let $\mbsemi(K)$ denote the space of
bi-invariant semi-Riemannian metrics on $K$. Since $e^{\alpha}$ is not an
eigenvalue of $A_h$, we may mimic the argument and notation of
Proposition~\ref{prop.beta} and Corollary~\ref{cor.beta} to see that there is a
\emph{unique} $\hh \in \mbsemi(K)$ such that: 
\begin{enumerate}
\item The semi-Riemannian metric $e^{\alpha}g_0 \times \hh$ on $K \times K$ is
non-degenerate on $\Delta \germ{K} \leq \germ{K} \times \germ{K}$. 
\item The map $\Phi: (K \times K, e^{\alpha}g_0 \times \hh) \to (K, h)$ is a
semi-Riemannian submersion.
\item The semi-Riemannian metric $e^\alpha g_0 \times \hh$  on $G \times K$ is
non-degenerate when restricted to $\Delta \germ{K} \leq \germ{g} \times
\germ{K}$ and $\Phi: (G \times K, e^\alpha g_0 \times \hh) \to (G,
g_{\alpha,h})$ is a semi-Riemannian submersion with totally geodesic fibers.
\end{enumerate}
\end{proof}
 
It is well known that the Laplace operator on a compact simple Lie group
equipped with the bi-invariant metric is given by the Casimir operator.
Consequently, in this case, we may explicitly compute the spectrum of the
Laplacian in terms of the highest weights of irreducible representations.   For
convenience, given a compact Lie group $G$ we will identify  $\widehat{G}$, the
space of irreducible unitary representations of $G$ (see \ref{rept}), with the
collection of highest weights of irreducible representations of $G$ with respect
to some choice of positive Weyl chamber.

\begin{thm}[\cite{Fegan}]\label{Thm:Casimir}
Let $K$ be a simply-connected semisimple compact Lie group and let $B$ denote
the Killing form on its Lie
algebra $\germ{K}$. We let $g_0$ denote the bi-invariant metric on $K$
determined by $-B$ and let $\| \cdot \|$ be the norm on $\germ{K}^*$, the dual
space to $\germ{K}$, determined by $-B$. Then 
$$\Spec(K, g_o) = \{ c(\lambda) : \lambda \in \widehat{G} \}$$
where $c(\lambda) = \| \lambda + \rho\|^2 - \| \rho \|^2$ and $\rho$ is half the
sum of the positive roots. 
\end{thm}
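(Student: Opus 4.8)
The plan is to reduce the statement to two classical facts: first, that the Laplace--Beltrami operator of a bi-invariant metric is the Casimir operator; second, Freudenthal's formula for the Casimir eigenvalues. Accordingly I would structure the argument in three steps.

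First I would fix a basis $\{X_1,\dots,X_N\}$ of $\germ{K}$ orthonormal with respect to $-B$; here compactness and semisimplicity of $K$ are used, since precisely then $-B$ is positive definite and $g_0$ is a genuine Riemannian metric. Each $X_i$ gives a left-invariant vector field on $K$, and because $g_0$ is bi-invariant one has $\nabla_{X_i}X_i=0$, so the Laplace--Beltrami operator of $(K,g_0)$, acting on $C^\infty(K)$, is $\Delta=-\sum_{i=1}^N X_i^2$. Under the right regular representation $\rho_K$ on $L^2(K)$ this is exactly the action of the Casimir element $\Omega=-\sum_i X_i^2\in U(\germ{K}_{\C})$ attached to the form $-B$. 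I would take care with signs so that $\Delta$ is the nonnegative geometer's Laplacian, matching the fact that $\Omega$ acts by a nonnegative scalar in every irreducible unitary representation.

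Second I would apply the Peter--Weyl theorem (see \ref{rept}(i)): $L^2(K)$ decomposes as a Hilbert direct sum of $\rho_K$-invariant subspaces, one for each $\lambda\in\widehat{K}$, on each of which $\rho_K$ is a multiple of the irreducible representation $\lambda$. Since $\Omega$ is central in $U(\germ{K}_{\C})$, Schur's lemma forces it to act as a scalar $c(\lambda)$ on the $\lambda$-isotypic subspace; hence $\Spec(K,g_0)=\{c(\lambda):\lambda\in\widehat{K}\}$. (As $K$ is simply connected, $\widehat{K}$ is indexed by the dominant integral weights; the ``$\widehat{G}$'' in the statement should read ``$\widehat{K}$''.)

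It then remains to evaluate $c(\lambda)$, and this is the only step with real content. Fixing a maximal torus and a positive system, one computes $\Omega$ on a highest-weight vector of the representation with highest weight $\lambda$ by writing $\Omega$ in a Chevalley-type root basis and pushing the raising operators to the right using the commutation relations; the root-vector terms acting on the highest weight vector produce exactly the $2\rho$ correction, giving Freudenthal's identity
$$c(\lambda)=\langle\lambda,\lambda\rangle+2\langle\lambda,\rho\rangle=\|\lambda+\rho\|^2-\|\rho\|^2,$$
where $\langle\cdot,\cdot\rangle$ and $\|\cdot\|$ are the inner product and norm on $\germ{K}^*$ dual to $-B$ and $\rho$ is half the sum of the positive roots. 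The main obstacle is this computation, together with the bookkeeping verifying that the form induced on $\germ{K}^*$ by $-B$ is the one appearing in the answer; the rest is assembly of standard structure theory. As this is a classical result, in the paper it is simply quoted from \cite{Fegan}.
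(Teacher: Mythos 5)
The paper does not prove this statement; it simply cites Fegan \cite{Fegan}, and you correctly observe this. Your argument is the standard proof of that cited result: identify the Laplacian of the bi-invariant metric $g_0$ with the Casimir operator of $-B$ acting in the right regular representation, use the Peter--Weyl decomposition of $L^2(K)$ together with Schur's lemma to see that the Casimir acts by the scalar $c(\lambda)$ on the $\lambda$-isotypic component, and then invoke Freudenthal's formula $c(\lambda)=\langle\lambda,\lambda+2\rho\rangle=\|\lambda+\rho\|^2-\|\rho\|^2$. The reasoning is correct, including the sign conventions and the use of simple connectedness to index $\widehat{K}$ by dominant integral weights, and you are right that the ``$\widehat{G}$'' in the paper's statement is a typo for ``$\widehat{K}$''.
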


The preceding result allows us to compute the spectrum of any semisimple Lie
group endowed with a bi-invariant metric. Indeed, let $K$ be a connected compact
semisimple Lie group with Lie algebra $\germ{K}= \germ{K}_1 \oplus \cdots \oplus
\germ{K}_r$, where $\germ{K}_i$ is simple for each $i = 1 , \ldots , r$.  We
caution that $K$ is not necessarily a direct product of simple factors; the
simple connected normal subgroups may have non-trivial, but necessarily finite,
intersections.  Let $\widetilde{K} = \widetilde{K}_1 \times \cdots \times
\widetilde{K}_r$ be the universal cover of $K$ with covering map $\pi$, here
$\widetilde{K}_i$ is the unique simply-connected Lie group with Lie algebra
$\germ{K}_i$. Then $K = \widetilde{K} / \Gamma$ for some finite subgroup $\Gamma
\leq Z(\widetilde{K})$ of the center of $\widetilde{K}$. It follows that
$\widehat{K}$, the set of equivalence classes of irreducible unitary
representations of $K$, can be identified with 
$$\widehat{K} \equiv \{ \sigma \in \widehat{\widetilde{K}} : \Gamma \leq \ker
(\sigma) \}.$$
Since, moreover,  $\widehat{\widetilde{K}}$ can be identified with
$\widehat{\widetilde{K}}_1 \times \cdots \times \widehat{\widetilde{K}}_r$ (see
\ref{rept} (2)), we may consider $\widehat{K}$ to be a subset of 
$\widehat{\widetilde{K}}_1 \times \cdots \times \widehat{\widetilde{K}}_r$. Now,
let $g = e^{\alpha_1}g_0 \upharpoonright \germ{K}_1 \oplus \cdots \oplus
e^{\alpha_r}g_0 \upharpoonright \germ{K}_r$ be a bi-invariant metric on $K$ and
let $\tilde{g}$  (given by the same expression as $g$)  be the bi-invariant
metric on $\widetilde{K}$ such that $\pi: (\widetilde{K}, \tilde{g}) \to (K, g)$
is a Riemannian covering. Then it follows from Theorem~\ref{Thm:Casimir} that 
$$\Spec(K, g) = \{ \sum_{1}^{r} e^{-\alpha_i} c_i(\lambda_i) : (\lambda_1,
\cdots , \lambda_r) \in \widehat{K} \},$$
where for $\lambda_i\in \widehat{\widetilde{K}}_i$, $c_i(\lambda_i)$ is the
expression defined in Theorem \ref{Thm:Casimir} with $K$ replaced by
$\widetilde{K}_i$.

We now compute the spectrum of an arbitrary semisimply naturally reductive
metric $g$ on a compact connected simple Lie group $G$.  We express $g$ as in
Remark \ref{rem.h}.   Let  $(G \times K, e^{\alpha} g_{0} \times
 \hh)$ be the associated semi-Riemannian manifold given in
Proposition~\ref{prop:SemiRiem}.   As in the Riemannian case, one defines the
Laplacian $\Delta$ on a semi-Riemannian manifold as $\Delta f =- \Div (\Grad
f)$.
In the semi-Riemannian setting the eigenvalues of the Laplace operator can be
both
positive and negative. It follows from Propostion \ref{prop:SemiRiem}, in
particular the bi-invariance of the metric $e^{\alpha} g_{0} \times \hh$, and
the discussion above  that the
spectrum of $(G \times K, e^{\alpha} g_{0} \times \hh)$ is given by the
collection of numbers
$$e^{-\alpha} \{ \| \lambda + \rho \|^{2} - \|\rho \|^{2} + \sum_{i = 1}^{r} 
(e^{ \alpha - \alpha_{i}} - 1) (\| \lambda_i + \rho_i \|^{2}_{i} - \| \rho_i
\|^{2}_{i})\},$$
as $\lambda$ varies over $\widehat{G}$ and $(\lambda_{1}, \cdots , \lambda_r)$
varies over $\widehat{K} \subset \widehat{\widetilde{K}}_1 \times \cdots \times
\widehat{\widetilde{K}}_r$. Since $\Phi : (G
\times K, e^{\alpha} g_{0} \times \hh) \to (G,g)$ is a
semi-Riemannian submersion with totally geodesic fibers (isometric to $\Delta
K$) it follows that the Laplacian of $(G, g)$ is given by the restriction of the
Laplacian on $G \times K$ to the $\Delta K$-invariant functions. Hence, the
discussion in \ref{rept} (2) implies that if we let 
$\widehat{G\times K}^{*} = \{ \sigma \otimes \tau \in \widehat{G\times K} :
[\sigma \upharpoonright K : \overline{\tau}] \neq 0  \} \subset \widehat{G}
\times \widehat{K}$, then  
$$\Spec(G,g_{\alpha, h}) = \{  e^{-\alpha} ( c(\lambda) + \sum_{i = 1}^{r}  (e^{
\alpha - \alpha_{i}} - 1)c_{i}(\lambda_{i}) ) : (\lambda; \lambda_1, \ldots ,
\lambda_r) \in  \widehat{G\times K}^{*} \}.$$

\begin{rem}
Although we have concentrated on compact simple groups, the computation above
can be generalized to obtain formulas for the known semisimply naturally
reductive metrics on an arbitrary compact Lie group $G$.
\end{rem}


\bibliographystyle{amsalpha}

\begin{thebibliography}{Sch4}


\bibitem[CS]{CS}
C.B. Croke and V.A. Sharafutdinov,
\newblock {\em Spectral rigidity of a compact negatively curved manifold},
\newblock Topology {\bf 37} (1990), 1265--1273.

\bibitem[DZ]{DZ} 
J.E. D'Atri and W. Ziller, 
\newblock \emph{Naturally reductive metrics and Einstein metrics on compact Lie
groups}, 
\newblock Mem. of Amer. Math. Soc., \textbf{125}.

\bibitem[Fe]{Fegan}
H.D. Fegan,
\newblock {\em The spectrum of the Laplacian on forms over a Lie group},
\newblock Pacific J. Math. {\bf 90}  (1980), 373--387.

\bibitem[Fo]{Folland} 
G.B. Folland, 
\newblock \emph{A Course in Abstract harmonic Analysis}, 
\newblock CRC Press, Boca Raton, 1995.

\bibitem[GSS]{GSS} C.S. Gordon, D. Schueth and C.J. Sutton, 
\newblock \emph{Spectral isolation of bi-invariant metrics on compact Lie
groups}, 
\newblock Ann. Inst. Fourier (Grenoble), to appear.

\bibitem[GSz]{GSz} 
C.S. Gordon and Z. I. Szabo
\newblock \emph{Isospectral deformations of negatively curved manifolds with
boundary which are not locally isometric}, 
\newblock Duke Math. J. {\bf 113} (2002), no. 2, 355--383.

\bibitem[Gro]{Gromov}
M. Gromov,
\newblock {\em Systoles and intersystolic inequalities},
\newblock Actes de la Table Ronde de Geometrie Differentielle (Luminy 1992),
Semin. Congr., 1, Soc. Math. France, Paris (1996), 291--362.
www.emis.de/journals/SC/1996/1/ps/smf\_sem-cong\_1\_291-362.ps.gz

\bibitem[GL]{GL} P.M. Gruber and C.G. Lekkerkerker, 
\newblock \emph{Geometry of Numbers} (2nd Ed.), 
\newblock North-Holland, Amsterdam 1987.

\bibitem[GK]{GK}
V.~Guillemin and D.~Kazhdan,
\newblock {\em Some inverse spectral results for negatively curved
2-manifolds},
\newblock Topology {\bf 19} (1980), 301--312.

\bibitem[Ku]{Ku}
R. Kuwabara,
\newblock {\em On the characterization of flat metrics by the spectrum},
\newblock Comm. Math. Helv. {\bf 55} (1980), 427--444.


\bibitem[M]{McKean}
H.P.~McKean,
\newblock {\em Selberg's trace formula as applied to a compact Riemann surface},
\newblock Comm. Pure Appl. Math. {\bf 25} (1972), 225--246.

\bibitem[OPS]{OPS}
B. Osgood, R. Phillips and P. Sarnak,
\newblock {\em Compact isospectral sets of surfaces},
\newblock J. Funct. Anal. {\bf 80} (1988), 212--234.

\bibitem[Pes]{Pesce}
H. Pesce,
\newblock {\em Borne explicite du nombre de torres plats isospectraux \`{a} un
tore donne\'{e}},
\newblock Manuscripta Math. {\bf 75} (1992), 211--223.

\bibitem[Pro]{Proctor}
E. Proctor,
\newblock {\em Isospectral metrics and potentials on classical compact simple
Lie groups},
\newblock Michigan Math. J. {\bf 53} (2005),  305--318.

\bibitem[R]{Richardson}
 R. W.  Richardson, Jr.,
\newblock {\em A rigidity theorem for subalgebras of Lie and associative
algebras},
\newblock Illinois J. Math., {\bf 11} (1967), 92--110.

\bibitem[Sch]{Schueth}
 D. Schueth,
\newblock {\em Isospectral manifolds with different local geometries},
\newblock J. Reine Angew. Math., {\bf 534} (2001), 41--94.

\bibitem[Sh]{Sh} 
V. Sharafutdinov,
\newblock {\em Local audibility of a hyperbolic metric}, 
\newblock preprint www.math.nsc.ru/\ensuremath{\sim}sharafutdinov/publ.html.

\bibitem[Sut]{Sut}
C. J. Sutton
\newblock {\em Isospectral simply-connected homogeneous spaces and the spectral
rigidity of group actions},
\newblock Comment. Math. Helv. {\bf 77} (2002), 701--717.

\bibitem[T1]{Tanno}
 S. Tanno,
\newblock {\em Eigenvalues of the Laplacian of Riemannian manifolds},
\newblock T\^ohoku Math. Journ. {\bf 25} (1973), 391--403.

\bibitem[T2]{Tanno2}
 S. Tanno,
\newblock {\em A characterization of the canonical spheres by the spectrum},
\newblock Math. Z. {\bf 175} (1980), 267--274.

\bibitem[U]{Urakawa}
 H. Urakawa,
\newblock {\em On the least positive eigenvalue of the Laplacian for compact
group manifolds},
\newblock J. Math. Soc. Japan {\bf 31} (1979), 209--226.

\bibitem[WZ]{WangZiller}
M. Wang and W. Ziller,
\newblock {\em On normal homogeneous Einstein manifolds},
\newblock Ann. Sci. \'{E}cole Norm. Sup. (4) {\bf 18} (1985), 563--633.

\bibitem[Wo]{Wolpert}
S. Wolpert,
\newblock {\em The eigenvalue spectrum as moduli for flat tori},
\newblock Trans. Amer. Math. Soc. {\bf 244} (1978), 313--321.

\end{thebibliography}

\end{document}